\theoremstyle{plain}\newtheorem{theorem}{Theorem}[section]
\newtheorem{lemma}[theorem]{Lemma}
\newtheorem{proposition}[theorem]{Proposition}
\renewenvironment{proof}[1][Proof]{\textbf{#1.} }{\ \rule{0.5em}{0.5em} \par }
\theoremstyle{remark}
\theoremstyle{definition}
\newtheorem{remark}[theorem]{Remark}
\newtheorem{definition}[theorem]{Definition}
\def\PP{\mathbb{P}}
\def\RR{\mathbb{R}}
\def\EE{\mathbb{E}}
\def\be{{\beta}}
\def\la{{\lambda}}
\def\si{{\sigma}}
\def\al{{\alpha}}
\def\be{{\beta}}
\def\Ga{{\Gamma}}
\def\ga{{\gamma}}
\def\si{{\sigma}}
\def\la{{\lambda}}
\def\vare{{\varepsilon}}
\def\si{{\sigma}}
\def\al{{\alpha}}
\newcommand{\laa}{\Lambda}
\newcommand{\oom}{\Omega}
\newcommand{\vp}{\varphi}
\def\eref#1{\hbox{(\ref{#1})}}
\newcommand{\lp}{\left(}
\newcommand{\rp}{\right)}
\newcommand{\lc}{\left[}
\newcommand{\rc}{\right]}
\newcommand{\R}{{\mathbb R}}
\newcommand{\HH}{\mathbb H}
\newcommand{\ep}{\ensuremath{\varepsilon}}
\newcommand{\upchi}{\text{\usefont{U}{psy}{m}{n}\symbol{'143}}}
\let\Section=\section
\def\section{\setcounter{equation}{0}\Section}
\begin{document}

\title[\bf Parabolic Anderson  equation]{\bf    
 Solvability of 
 parabolic Anderson  equation
with  fractional Gaussian noise 
} 
\author{\sc Zhen-Qing Chen  }
\address{Department of Mathematics, University of Washington, Seattle, WA 98195, USA }
  \email{zqchen@uw.edu }
\author{ \sc   Yaozhong Hu}
\address{Department of Mathematical and  Statistical
 Sciences, University of Alberta at Edmonton,
Edmonton, Canada, T6G 2G1}
  \email{ yaozhong@ualberta.ca
}
\thanks{The research of ZC is supported in part by a  Simons Foundation Grant. 
The research of YH is supported in part by an NSERC Discovery grant  and a startup fund from University of Alberta at Edmonton.} 
 
\date{}  
 
\subjclass[2010]{Primary: 60H15; 60G60;  Secondary: 60G15,  60G22, 35R60  }

\keywords{Stochastic heat equation, fractional Brownian fields,
  Wiener chaos expansion, random field   solution,  necessary condition,
  sufficient condition, moment bounds. }

\begin{abstract} This paper   provides  necessary   as well as    sufficient
conditions on the Hurst parameters so that the continuous time
parabolic Anderson  model $\frac{\partial u}{\partial t}=\frac{1}{2}\frac{\partial^2 u}{\partial x^2}+u\dot{W}$ 
  on $[0, \infty)\times \R^d  $  with $d\geq 1$  has a unique  random field   solution, where $W(t, x)$ is a fractional Brownian sheet
  on $[0, \infty)\times \R^d$ and formally $\dot W =\frac{\partial^{d+1}}{\partial t \partial x_1 \cdots \partial x_d} W(t, x)$.
When the noise $W(t, x)$ is white in time, our condition is both necessary and sufficient
 when the initial data $u(0, x)$ is bounded between two positive constants. When the noise 
is fractional in time with Hurst parameter $H_0>1/2$, 
   our sufficient condition, which improves the known results in literature,
  is  different from  the necessary one.    
  \end{abstract} 

\maketitle

\section{Introduction}

Let $d\geq 1$. 
In this paper we are interested in the  following  stochastic  heat    equation (Parabolic Anderson model) 
for   $u=u(t, x)$ with $t\geq 0$ and $x=(x_1, \cdots, x_d) \in{\mathbb{R}}^d$:   
\begin{equation}\label{spde}
\frac{\partial u}{\partial t}=\frac{1}{2}\frac{\partial^2 u}{\partial x^2}+u\dot{W}\,  \quad \hbox{on } 
(0, \infty) \times \R^d,
\end{equation}
   where {  $W= \left\{ W(t, x); \, t\in [0, \infty), \, x\in \R^d \right\}$ }
   is a centered Gaussian process defined  on some complete probability space 
  $(\Omega, \mathcal{F}, \PP)$ with  the  covariance given
by
\begin{equation}  \label{cov}
\EE \lc W(s,x)W(t,y)\rc= R_{H_0}(s,t) \prod_{i=1}^dR_{H_i}(x_i,y_i)    , 
\quad s, t\geq 0, \, x, y \in \R^d, 
\end{equation}
where $\frac12\le H_0<1$ and  $ 0<H_i <1$ for $ i=1,\cdots, d$,   and
where  for  $\beta >0$,
\begin{equation}
 R_{\beta }(a, b) :=
\frac{1}{2 } \lp |a|^{2\beta }+|b|^{2\beta }-|a -b |^{2\beta }\rp  	\quad \hbox{for all }  a, b\in \RR\,.
\end{equation}    
Here we use $:=$ as a way of definition.    That is, $W(t, x)=W(t, x_1, \cdots, x_d)$ is a fractional Brownian motion of Hurst parameter $H_0\ge 1/2$    in time variable and  is a 
fractional Brownian motion with Hurst parameter $ H=(H_1, \cdots, H_d)$  in     space variables,  and formally 
$\dot{W}(t, x) =\frac {\partial ^{d+1}W}{\partial t \partial x_1\cdots\partial x_d}$. 
 Note that  $R_{1/2}(s,t)=s\wedge t$  and so   $\frac{\partial ^2}{\partial s \partial  t}
R_{1/2}(s,t) =\delta_{\{0\}}(s-t)$ in the   distributional sense, where $\delta_{\{0\}}$ is the Dirac measure on $\R$ concentrated at the origin $0$. 
  One dimensional Brownian motion has covariance function $R_{1/2}$ so it has Hurst parameter 1/2. 
 For fractional Brownian motions, the smaller the Hurst parameters are, the rougher their sample paths.
  The precise meaning of the (random field) solution  to   
 \eqref{spde}  will be given  later in Definition \ref{def-sol-sigma}. 
The product $u\dot{W}$ in the above  equation is in the sense of Wick, which  means that 
in Equation \eqref{e.3.1} of the definition of the random field  solution 
the stochastic integral is understood as the 
  Ito-Skorohod   integral. 
  
The equation \eqref{spde} is one of the simplest stochastic partial differential equations
and describes a heat propagation through a random medium.  It has a close connection with KPZ equation
through the Hopf-Cole transform.  Equation \eqref{spde}
has been studied by many authors. 
We refer the reader to a recent survey \cite{Hu} and references therein.

  Throughout the paper,  we assume that the Hurst parameter $H_0$ in time is always greater than or equal to $1/2$.  But some of the spatial   Hurst parameters $H_i$    in \eqref{spde}  can be  less than $1/2$ while others  are 
greater than  or equal to $1/2$.  Let $d_*$ denote the total number of  $H_i$ whose value is strictly less than 1/2.
Without loss of generality and for the simplification of notation we can assume that  $H_k<1/2$ for $1\leq k\leq d_*$    and
and $H_k\ge 1/2$ for $d_* <  k \leq d$ for some $d_* \in \{0, 1, \cdots, d\}$. 
    Let
$$
 H_*=H_1+\cdots+H_{d_*} \quad \hbox{and} \quad  { H^* =H_1+\cdots+H_d } .
$$
  It is known      that when $H_0\ge 1/2$
  and    $H_i>1/2$ for all $1\leq i\leq d$,  
the equation \eqref{spde} has a unique random field solution when $H^*>d-1$
(see  \cite[Example 2.6]{HHNT}; see also    \cite{Dal}  for more general Gaussian noises that is white in time, including
the fractional Brownian noise with 
 $H_0=1/2$ and all the spatial Hurst parameters being  greater than $1/2$). 
   When $H_0=1/2$ and $d_*=d$,  it is a folklore that $d$ must be 1 
     for \eqref{spde}  to be solvable  
   with bounded initial value. 
  It is shown in 
 \cite{HHLNT1, HHLNT2} that when $d=1$, $H_0=1/2$ and  $1/4<H^*<1/2$, 
the   equation \eqref{spde} has a unique random field solution.     

Xia Chen considered  fractional Gaussian noise $W$  
that has $H_0\in [1/2, 1)$   but some of  $H_i\in (0, 1)$,  $1\leq i\leq d$, 
are  greater than or equal to $1/2$ while others are allowed to be less than  $1/2$. 
 He showed   \cite[Theorems 1.2 and  1.3]{chen}   that when 
 \begin{equation}\label{e:1.3} 
\begin{cases}
2(d-{H^*} )+(d_*-2H_*)<2 &\quad \hbox{when } H_0=1/2,  \smallskip \\
   {H^*} >d-1 \, \hbox{ and }  \
4(1-H_0)+2(d-{H^*} )+(d_*-2H_*)<4   &\quad \hbox{when } H_0>1/2 \,, 
\end{cases}
 \end{equation} 
 the SPDE  \eqref{spde} has a unique global random field  solution in   $L^2(\Omega, \mathcal{F},  \PP)$
for any initial value $u_0(x)$ that is bounded.
  He also showed that, for $d\geq 2$,  when
 \begin{equation}\label{e:1.3b} 
   H_0>1/2, \quad  {H^*} =d-1 \quad  \hbox{and}  \quad
4(1-H_0)+ (d_*-2H_*)<2   \,, 
  \end{equation} 
the SPDE  \eqref{spde} has a unique local random field  solution in   $L^2(\Omega, \mathcal{F},  \PP)$
for any initial value $u_0(x)$ that is bounded. 
 An interesting and challenging    
problem is  whether  the above conditions are also necessary.   As we shall see,  the 
answer will be no and when $H_0=1/2$    we  give a 
necessary and sufficient condition for the existence of solution to \eqref{spde},
and when $H_0>1/2$, we improve    the sufficient condition \eqref{e:1.3} as well as give a necessary condition.   

  The following  are  the two main results of this paper, by considering $H_0=1/2$ and $1/2< H_0<1$ separately.     
  See Definition \ref{def-sol-sigma}  for the  precise definition of global and local solution to \eqref{spde}. 
   Our main results show that   the condition      $H^*>d-1$ is sufficient for the existence of   global 
    random field solution
  to \eqref{spde} 
  when  $d\geq 2$ and $H_0\in [1/2, 1)$, and is also necessary   for the existence of local random field  solution to \eqref{spde}  
  when $H_0=1/2$.  
  We further show that when $H_0\in (1/2, 1)$ and $d\geq 2$,  there exists  a unique local solution in the critical case $H^*=d-1$.
 Our necessary condition for  the case of $H_0>1/2$  
 is different  from the sufficient condition 
 but it  involve $H^*$ and $H_0$ only; 
 we do not need to separate the rougher ones (those with $H_i < 1/2$)
   from the smoother ones (those with $H_i \geq 1/2$) in spatial Hurst parameters.

\begin{theorem}\label{t.1.1}  Suppose  $H_0=1/2$; that is, the noise $  W$ is white in time. 
\begin{enumerate} 
\item[\rm (i)]   Suppose the initial condition satisfies $|u_0(x)|\le C$ for some  constant
$C>0$. If  
\begin{equation}\label{e:1.4}
\begin{cases}
{H^*} >1/4&\qquad\hbox{when }d= 1,  \smallskip  \\
{H^*} >d-1&\qquad\hbox{when  }d\ge 2,
\end{cases}  
\end{equation}
then the equation \eqref{spde} has a unique (global)  random field  solution in $L^2(\Omega, \mathcal{F},  \PP)$
with $u(0, x)=u_0 (x)$. 
Moreover, in this case, there is a positive constant $C_H>0$ so  that 
\begin{equation}
\EE \left[ u(t,x)^{  p}\right]\le C_H \exp\left( C_H t p^{\frac{{H^*} -d+2}{{H^*} -d+1}}\right) 
\quad \hbox{for any }  t\ge 0 \hbox{ and }  \ p\ge 2\,. \label{e.1.5} 
\end{equation}

\item[\rm (ii)]  Let the initial condition  satisfy 
$u_0(x)\ge c$  (or $u_0(x)\le -c$)  for some positive  constant $c>0$. 
 If    the equation \eqref{spde} has a   local    solution in $L^2(\Omega, \mathcal{F},  \PP)$,   then \eqref{e:1.4} must be satisfied. 
 \end{enumerate} 
\end{theorem}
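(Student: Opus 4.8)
The plan is to study \eqref{spde} through its Wiener chaos expansion. Writing the mild (Duhamel) form of the equation and iterating the resulting fixed-point map, the candidate solution is $u(t,x)=\sum_{n\ge0}I_n(f_n(\cdot;t,x))$, where $I_n$ denotes the $n$-fold It\^o--Skorohod integral and the time-ordered kernel is
\begin{equation*}
f_n(s_1,y_1,\dots,s_n,y_n;t,x)=p_{t-s_n}(x-y_n)\cdots p_{s_2-s_1}(y_2-y_1)\,(p_{s_1}*u_0)(y_1)
\end{equation*}
for $0<s_1<\cdots<s_n<t$, with $p_s$ the heat kernel on $\RR^d$. Because the equation is linear, inserting a chaos expansion into the mild formulation of \eqref{spde} determines the kernels uniquely, so both existence and uniqueness of an $L^2(\Omega)$ random field solution reduce to the single analytic assertion
\begin{equation*}
\EE[u(t,x)^2]=\sum_{n\ge0}n!\,\|\widetilde f_n(\cdot;t,x)\|_{\mathcal H^{\otimes n}}^2<\infty,
\end{equation*}
where $\widetilde f_n$ is the symmetrization and $\mathcal H$ is the Hilbert space attached to the noise.

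For the sufficiency in (i) I would evaluate these norms by Fourier analysis in space. Since $H_0=1/2$ the noise is white in time, so the time part of $\|\widetilde f_n\|^2$ collapses to an integral over the simplex $\{0<s_1<\cdots<s_n<t\}$, while the spatial part, by Plancherel, carries the weight $\prod_{i=1}^d c_{H_i}|\xi_i|^{1-2H_i}$ in each of the $n$ frequency vectors. The first chaos already reveals the threshold: integrating out time leads to $\int_{\RR^d}\frac{1-e^{-t|\xi|^2}}{|\xi|^2}\prod_{i=1}^d|\xi_i|^{1-2H_i}\,d\xi$, whose integrand is homogeneous of degree $d-2H^*-2$, hence integrable at infinity exactly when $H^*>d-1$ and always integrable at the origin because $H^*<d$. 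For $d\ge2$ this finite-order condition is the binding one, and a Cauchy--Schwarz and semigroup estimate on the telescoping product of heat kernels then shows the full series converges once $H^*>d-1$. For $d=1$ the first chaos is finite for every $H^*>0$, so the constraint must come from the growth of the general term; tracking the scaling of the simplex volume against the single spatial weight $|\xi|^{1-2H^*}$ produces a factorial gain that is summable precisely when $H^*>1/4$.

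For the moment bound \eqref{e.1.5} I would pass from $L^2$ to $L^p$ using the same chaos structure, either through the Gaussian diagram formula for $\EE[u(t,x)^p]$ or an equivalent Feynman--Kac representation in which only the cross-covariances among $p$ Brownian paths survive the Wick product. Bounding each pairing by the Fourier estimate above and optimizing the resulting exponential series in $t$ yields the stated rate $\exp(C_Ht\,p^{(H^*-d+2)/(H^*-d+1)})$; the exponent $1+1/(H^*-d+1)$ is the intermittency rate governed by the gap $\gamma:=H^*-(d-1)>0$ from the critical integrability threshold found above.

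The necessity in (ii) exploits positivity. If $u_0\ge c>0$ then every $f_n$ is nonnegative and bounded below by $c$ times its product of heat kernels, so $\EE[u^2]$ dominates the same Fourier integrals from below. When $d\ge2$ and $H^*\le d-1$ the first-chaos integral $\int_{\RR^d}\frac{1-e^{-t|\xi|^2}}{|\xi|^2}\prod_{i=1}^d|\xi_i|^{1-2H_i}\,d\xi$ already diverges for every $t>0$ (logarithmically at the critical value $H^*=d-1$), so no local $L^2$ solution can exist. The delicate case is $d=1$ with $H^*\le1/4$, where each individual chaos is finite and one must show the series itself diverges. The plan is to bound the general term $n!\,\|\widetilde f_n\|^2$ from below by localizing both the simplex and the frequency integrations to explicit regions and using $p_{s_1}*u_0\ge c$, so as to extract a lower bound of the same factorial order as the upper bound; this order fails to be summable exactly when $H^*\le1/4$. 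This matching lower bound in the $d=1$ case is the main obstacle, since it demands that the simplex-versus-frequency scaling be captured sharply, not merely up to the polynomial losses that are harmless for the sufficiency direction.
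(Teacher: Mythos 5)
Your overall architecture (chaos expansion, Fourier computation of the $\HH^{\otimes n}$ norms, simplex integration, hypercontractivity for the $L^p$ bound) matches the paper's, but the necessity argument contains a genuine error for $d=1$. You assert that when $d=1$, $H_0=1/2$ and $H^*\le 1/4$ ``each individual chaos is finite and one must show the series itself diverges,'' and you make this the centerpiece of part (ii). That premise is false: the second Wiener chaos already has infinite $L^2$ norm in this regime. After the substitution $\eta_1=\xi_1$, $\eta_2=\xi_1+\xi_2$, the second-chaos norm carries the factor $|\eta_2-\eta_1|^{1-2H}$ with a \emph{positive} exponent, and integrating out the large-$\eta_1$ region produces $(s_2-s_1)^{2H-3/2}$, which fails to be integrable over $\{0<s_1<s_2<t\}$ exactly when $H\le 1/4$. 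This is precisely how the paper proves necessity in all cases: it bounds $\EE[I_2(f_2^{(t,0)})^2]$ from below via a Gaussian-expectation representation of the spatial integral and shows the resulting time integral diverges unless \eqref{e:1.4} (resp.\ \eqref{e.1.9}) holds. Your proposed route --- matching lower bounds on $n!\,\|\widetilde f_n\|^2$ for every $n$ followed by divergence of the sum --- is both unnecessary and unworkable as described, since the single term $n=2$ is already $+\infty$. (Your first-chaos divergence argument for $d\ge2$ is essentially sound and is in fact a simpler alternative to the paper's second-chaos computation in that case, modulo the reduction from $u_0\ge c$ to $u_0\equiv 1$; but it yields nothing for $d=1$, which is exactly where the threshold $1/4$ must come from.)

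On the sufficiency side the sketch points in the right direction but omits the step that carries all the difficulty. The obstruction is not the first chaos or the ``telescoping product of heat kernels'': after the change of variables $\eta_i=\xi_1+\cdots+\xi_i$, the $n$-th spatial integral carries the coupled weights $\prod_{i}|\eta_i-\eta_{i-1}|^{1-2H_k}$, with positive exponents in the rough directions $H_k<1/2$, and one must show that the resulting function of the time increments $u_i=s_{i+1}-s_i+r_{\si(i+1)}-r_{\si(i)}$ is bounded by $C^n\prod_i u_i^{H^*-d}$ with a constant not deteriorating in $n$. The paper achieves this by rewriting the integral as $c^n\,\EE\big[\prod_i|\lambda_i X_i-\sqrt{1-\lambda_i^2}\,X_{i-1}|^{1-2H_k}\big]$ for i.i.d.\ standard Gaussians and iterating the conditional estimate $\EE[|\lambda X+b|^{-\alpha}]\le C(\lambda\vee b)^{-\alpha}$; ``Cauchy--Schwarz and a semigroup estimate'' will not produce a bound of this quality. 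Note also that for $d=1$ the paper does not reprove sufficiency at all: condition \eqref{e:1.4} coincides with \eqref{e:1.3} there, so it simply invokes Chen's theorem.
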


\begin{remark}\label{r.1.2}  
	\begin{enumerate}
		\item[(i)] In the case of $H_0=1/2$, the condition \eqref{e:1.3} is equivalent to \eqref{e:1.4} when $d=1$ and 
		is stronger than  \eqref{e:1.4} when $d\ge 2$ 
		  since $d_*-2H_*  > 0$ when $d_*\geq 1$ and  $d_*-2H_* = 0$ when $d_*=0$.  
	 	\item[(ii)]  Since   $0<H_i<1$ for all $1\leq i=\leq d$ and $1\leq H_i<1/2$ for $1\leq i\leq d_*$,
		we have $H^*< \frac{d_*}{2} +(d-d_*)$,  that is, $H^*< d- (d^*/2)$. 
		  Thus,  Condition  \eqref{e:1.4} implies  that 
		$d- (d_*/2)>d-1$, or  $d_*<2$.  Thus under condition \eqref{e:1.4},  $d_*$ can only be $0$ or $1$.  
	\end{enumerate}
 
\end{remark}

\begin{theorem}\label{t.1.2}  Suppose   $H_0>1/2$.
\begin{enumerate} 
\item[\rm (i)]  
  Let {  $d\ge 1$.}  
   Suppose the initial condition satisfies $|u_0(x)|\le C$ for some  constant
$C>0$. If  
\begin{equation}\label{e:1.7} 
{ 
\begin{cases}
{H^*} >\frac{3}4 -H_0 &\qquad\hbox{when }d= 1,  \smallskip  \\
{H^*} >d-1&\qquad\hbox{when  }d\ge 2,
\end{cases} 
}
 \end{equation}
then the equation \eqref{spde} has a unique (global) random field  solution in $L^2(\Omega, \mathcal{F},  \PP)$
with $u(0, x)=u_0 (x)$. 
Moreover, in this case, there is a positive constant $C_{H,d} >0$ so  that 
\begin{equation}
\EE \left[ u(t,x)^p\right]\le 
C\exp\left[ C_H t^{\frac{{H^*} -d+2H_0}{{H^*} -d+1}} p^{\frac{{H^*} -d+2}{{H^*} -d+1}}
 \right]\quad \hbox{for any }  t\ge 0  \hbox{ and }  p\ge 2\,. \label{e.1.8} 
  \end{equation}
Moreover, if 
\begin{equation}\label{e:1.7b} 
d\geq 2 \quad \hbox{and} \quad  H^*=d-1, 
\end{equation} 
the equation \eqref{spde} has a unique local random field  solution in $L^2(\Omega, \mathcal{F},  \PP)$
with $u(0, x)=u_0 (x)$.

\smallskip

\item[\rm (ii)]  Let the initial condition  satisfy 
$u_0(x)\ge c$ for some positive  constant $c>0$. 
 If    the equation \eqref{spde} has a    local  solution in $L^2(\Omega, \mathcal{F},  \PP)$,  then 
\begin{equation}  \label{e.1.9}
 H^*+2H_0 >
\begin{cases}
 5/4 &\qquad \hbox{when } d=1,  \smallskip \\
(3d+2)/{4} &\qquad \hbox{when }d\ge 2\,. \\
\end{cases}  
\end{equation}
 \end{enumerate} 
\end{theorem}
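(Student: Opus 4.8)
The plan is to build the solution from its Wiener chaos expansion and reduce the entire theorem to the estimation of the chaos norms. Writing the mild (Skorohod) formulation and iterating, the candidate solution is $u(t,x)=\sum_{n\ge 0}I_n(f_n(\cdot,t,x))$, where $f_n$ is the symmetrization of the time-ordered kernel
$$
g_n(s_1,y_1,\dots,s_n,y_n)=p_{t-s_n}(x-y_n)\cdots p_{s_2-s_1}(y_2-y_1)\,(P_{s_1}u_0)(y_1)\,\mathbf 1_{\{0<s_1<\cdots<s_n<t\}},
$$
with $p$ the heat kernel and $P_s$ the heat semigroup. By orthogonality of the chaoses, a global $L^2$ solution exists and is unique iff $\EE[u(t,x)^2]=\sum_n n!\,\|f_n\|_{\cH^{\otimes n}}^2<\infty$ for every $t$, and a local one iff this holds for small $t$. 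Since $H_0>1/2$, the temporal part of $\langle\cdot,\cdot\rangle_{\cH}$ is the genuine kernel $\alpha_{H_0}|s-r|^{2H_0-2}$, while after a spatial Fourier transform the heat factors become $e^{-(t-s)|\xi|^2/2}$ and the spatial covariance becomes the positive spectral measure $\mu(d\xi)=c_H\prod_i|\xi_i|^{1-2H_i}\,d\xi$. Thus everything becomes the analysis of explicit space--time--frequency integrals.

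For part (i) I would prove the key bound
$$
n!\,\|f_n\|_{\cH^{\otimes n}}^2\ \le\ \frac{C^n\,t^{(H^*-d+2H_0)n}}{(n!)^{\,H^*-d+1}},
$$
by integrating the frequencies gap by gap: since $\int_{\R^d}e^{-\tau|\xi|^2/2}\mu(d\xi)=C\tau^{H^*-d}$, each time increment $\tau$ turns into a power $\tau^{H^*-d}$, and inserting the $n$ fractional-in-time kernels and applying a Dirichlet/Beta computation on the doubled time simplex produces the stated power of $t$ together with the factorial gain $(n!)^{-(H^*-d+1)}$. When $d\ge2$, condition \eqref{e:1.7} is exactly $H^*-d+1>0$, which makes the Beta integrals (hence $C$) finite and the series summable; summing yields the $L^2$ bound, and \eqref{e.1.8} follows from hypercontractivity $\|I_n(f_n)\|_{L^p}\le(p-1)^{n/2}\|I_n(f_n)\|_{L^2}$ together with the Laplace-type asymptotics $\sum_n z^n/(n!)^{H^*-d+1}\asymp\exp\!\big(c\,z^{1/(H^*-d+1)}\big)$, which reproduces the two exponents $\tfrac{H^*-d+2H_0}{H^*-d+1}$ and $\tfrac{H^*-d+2}{H^*-d+1}$. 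In the critical case \eqref{e:1.7b} one has $H^*-d+1=0$, the bound degenerates to $(C\,t^{2H_0-1})^n$, and the series converges only for $t$ below a threshold, giving the unique local (not global) solution.

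The case $d=1$ is more delicate: here $H^*-d+1=H^*>0$ automatically, so the factorial decay never fails and the genuine restriction is the precise exponential growth rate of $n!\,\|f_n\|^2$ hidden in the constant $C$. Estimating this rate sharply, rather than through the crude gap-by-gap bound, produces the threshold $H^*>\tfrac34-H_0$; at $H_0=\tfrac12$ this becomes $H^*>\tfrac14$, consistent with Theorem \ref{t.1.1}(i). This is the fractional-in-time refinement of the $1/4$-phenomenon and is the main technical obstacle of part (i).

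For the necessity in part (ii) I would pass to a lower bound, using $u_0\ge c>0$. Via the second-moment Feynman--Kac representation for the Skorohod solution (only the cross term appears),
$$
\EE[u(t,x)^2]\ \ge\ c^2\,\EE\Big[\exp\Big(\alpha_{H_0}\int_0^t\!\!\int_0^t|s-r|^{2H_0-2}\gamma(B^1_s-B^2_r)\,ds\,dr\Big)\Big],
$$
with $B^1,B^2$ independent Brownian motions and $\gamma$ the (distributional, for rough directions) spatial covariance; equivalently, by positivity of $\mu$, symmetrization only increases the norm so $n!\,\|f_n\|^2\ge\|g_n\|^2$. Finiteness of the right-hand side is therefore necessary. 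Its leading term, already at $n=1$, forces
$$
\int_0^t\!\!\int_0^t|s-r|^{2H_0-2}(2t-s-r)^{H^*-d}\,ds\,dr<\infty\iff H^*+2H_0>d,
$$
which for $d\ge2$ implies \eqref{e.1.9} since $(3d+2)/4\le d$. For $d=1$ each single chaos is finite and the binding constraint is instead the summability of the lower series, whose sharp growth rate yields the threshold $H^*+2H_0>5/4$; this matches the necessary half of Theorem \ref{t.1.1} as $H_0\downarrow\tfrac12$. As in part (i), the crux is pinning down the exact constants, which requires a careful resolution of the coupled singular integral that pairs the temporal kernel $|s-r|^{2H_0-2}$ against the spectral growth $\prod_i|\xi_i|^{1-2H_i}$ in the rough directions $H_i<1/2$ (together with a smoothing argument to make sense of $\gamma$ there); the cruder power counting suffices for $d\ge2$ but not for the sharp $d=1$ thresholds.
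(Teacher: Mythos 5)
Your overall architecture (Wiener chaos expansion, termwise moment bounds for sufficiency, divergence of a low-order chaos for necessity) is the same as the paper's, and one of your steps is a legitimate and arguably cleaner alternative: for the necessity when $d\ge 2$, your first-chaos computation is correct and yields the stronger condition $H^*+2H_0>d$, which implies \eqref{e.1.9} because $(3d+2)/4\le d$ for $d\ge 2$; the paper instead extracts \eqref{e.1.9} from a lower bound on the second chaos. Your treatment of the critical case \eqref{e:1.7b} (the bound degenerates to a geometric series in $t^{2H_0-1}$, hence only a local solution) also matches the paper.

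There are, however, two genuine gaps. First, in part (i) the step ``each time increment $\tau$ turns into a power $\tau^{H^*-d}$ because $\int_{\R^d}e^{-\tau|\xi|^2/2}\mu(d\xi)=C\tau^{H^*-d}$'' is not a valid derivation of your key bound when some $H_i<1/2$: after the substitution $\eta_i=\xi_1+\cdots+\xi_i$ that diagonalizes the heat factors, the spectral density becomes $\prod_{i}\prod_k|\eta_{ik}-\eta_{i-1,k}|^{1-2H_k}$ with \emph{positive} exponents in the rough directions, so the frequency integral does not factor over the gaps and cannot be integrated gap by gap. Overcoming exactly this coupling is the bulk of the paper's argument (a representation through i.i.d.\ Gaussians, followed by iterated conditioning and the estimate $\EE\,|\lambda X+b|^{-\alpha}\le C(\lambda\vee b)^{-\alpha}$ from \cite{HNS}); your proposal asserts the resulting estimate without an argument that survives roughness. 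Second, your description of the $d=1$ necessity mechanism is incorrect: you claim each individual chaos is finite and that only the summability of the series fails, whereas in fact the second chaos itself diverges once $H^*+2H_0\le 5/4$ --- this is precisely what the paper proves, via the uniform positive lower bound on $f(\lambda)=\EE\big[|X_1(\lambda X_1-\sqrt{1-\lambda^2}\,X_2)|^{1-2H}\big]$ and Lemma \ref{l.3.2} --- so no series-growth analysis is needed, and none is supplied in your proposal. Relatedly, for the $d=1$ sufficiency you flag the threshold $H^*>\tfrac34-H_0$ as the main obstacle but give no proof; the paper disposes of it by noting that for $d=1$ condition \eqref{e:1.7} coincides with \eqref{e:1.3} and citing \cite{chen}.
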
  

\begin{remark} 
\begin{enumerate}  
\item[(i)]   When $d=1$ and $H_0>1/2$, condition \eqref{e:1.3}
is equivalent to $H^*>\frac34 -H_0$.  
Clearly,  when $H_0>1/2$, conditions  \eqref{e:1.3} and \eqref{e:1.3b}
are  stronger than   conditions \eqref{e:1.7} and \eqref{e:1.7b}, respectively, when  $d\geq 2$.  

\smallskip
 
\item[(ii)] Condition \eqref{e:1.4} is the same as \eqref{e:1.7} if we take $H_0=1/2$ there.

\smallskip

\item[(iii)]   The necessity of the condition \eqref{e:1.4} when $H_0=1/2$ 
and of the condition \eqref{e.1.9} when $H_0>1/2$
for the existence of solutions to SPDE \eqref{spde} 
is  new.   
There seems no discussions  about the necessary conditions 
for this equation before.  

\smallskip

\item[(iv)]   Note that the sufficient condition and the necessary condition in Theorems \ref{t.1.1} and \ref{t.1.2} have different 
requirements on the initial value $u(0, x)$ of $u$. 
It is easy to check directly that, when $1/2<H_0<1$,  the sufficient condition \eqref{e:1.7} is strictly stronger than the necessary condition \eqref{e.1.9}.   

\smallskip

\item[(v)]  Estimate   \eqref{e.1.8} coincides with the upper bound part of (6.1) of
\cite{HHNT} when all Hurst parameters are greater than $1/2$   (by setting    $\eta_i=2H_i-2$ and $\beta=2H_0-2$), so we expect our bound 
\eqref{e.1.8} is sharp. 

\smallskip

  \item[(vi)] As we see from  Remark \ref{r.1.2}(ii) that $H^*<d- (d_*/2)$. So condition \eqref{e:1.7b}  implies that 
  $d_*<2$, that is, either $d_*=0$ or $d_*=1$.  
  
\smallskip

  \item[(vii)]  Again from the fact that $H^*<d- (d^*/2)$, we see that when $d\geq 2$,
condition \eqref{e.1.9} implies  $d_*<(d/2)   +4H_0 -1< (d+6)/2$.  But  this  
    condition is not optimal.  In fact, when $W$ is time indenpendent (which corresponds to the case $H_0=1$) and 
space white (which corresponds to the case $H_1=\cdots=H_d=1/2$),  it is known     
(see \cite{hu02})   
that the  equation  \eqref{spde} has a global solution   when $d=1$ and has   a local solution  but has no global solution   when $d= 2$,
 and has  no
any local  solution when $d\ge 3$. So our conjecture is that  even when $H_0>1/2$ and $H_i\in [1/2, 1)$ for all $1\leq i\leq d$, 
to ensure the global unique solution we need  $d_*\le 1$ and to ensure local solution 
we need $d_*\le 2$.

 \end{enumerate} 
 \end{remark} 
 
In this paper, we do not discuss the solvability of \eqref{spde} when  the   time   Hurst  parameter $H_0\in (0, 1/2)$.
  We refer the interested reader  to \cite{CHKN,chen2, HLN} and references therein
  for recent development when $H_0<1/2$.  Let us also mention that for the additive noise
  (namely, replace $u\dot W$ by $\dot W$ in \eqref{spde}, the necessary 
  and sufficient condition is   known (\cite{HLT})  even for more general Gaussian noise.    
 
 \medskip
 
The rest of the paper is organized as follows. 
In Section \ref{sec:preliminaries}, we recall some facts on Gaussian random fields,   stochastic integrals with respect to them
and their properties 
that will be used in this paper.  
The proofs of  the sufficient part of Theorems \ref{t.1.1} and \ref{t.1.2} are presented in   Section \ref{S:3},
while the proof of the necessary part of these two theorems are  given in Section \ref{S:4}.
In this paper, for  $a, b\in \R$, $a\vee b:= \max \{a, b\}$ and $a\wedge b :=\min \{a, b\}$.

\section{Preliminaries}\label{sec:preliminaries}

The noise $W$ can be viewed as a Brownian motion with values in an infinite dimensional Hilbert space. One might thus think that the stochastic integration theory with respect to $W$ can be handled by classical theories  (e.g. \cite{DPZ}). However, the spatial covariance function of $\dot W$, which is formally   $\prod_{i=1}^d H_i(2H_i-1) |x_i-y_i|^{2H_i-2}$,  is  not  locally integrable  along
 the diagonals as  $H_i<1/2$ for $1\leq i\leq   d_*$.  The  
stochastic integral with respect to $W$ then needs to be dealt with 
through    other means. We recall briefly some 
key points needed in this  paper and we refer to  \cite{HHLNT1,HHLNT2}
for more details.

We start by introducing some  basic notation on Fourier transforms. The space of   Schwartz functions is
denoted by  $\mathcal{S}$. Its dual, the space of tempered distributions, is 
denoted by  $\mathcal{S}'$.

The Fourier
transform of a function $u \in \mathcal{S} $ is defined  by 
\[
\widehat  u(\xi) :=\int_{\RR^d} e^{-i\xi x} u(x) dx\ 
\] 
so that the inverse Fourier transform is given by 
  $$
 \check u (\xi) :=  ( 2 \pi)^{- d} \int_{\RR^d} e^{i\xi x} u(x) dx =   ( 2 \pi)^{- d} \widehat u (-\xi) .
 $$ 

 Let $C^\infty_c((0,\infty)\times \R^d)$ denote the space  of real-valued infinitely differentiable functions with compact support on $(0, \infty) \times \R^d$. The noise  $W$  can be described  by a mean zero Gaussian family $\{W(\vp) ,\, \vp\in
C^\infty_c((0,\infty)\times \R^d)\}$ defined on a complete probability space
$(\Omega,{\mathcal{F}} ,\mathbb{P})$, whose covariance structure
is given by
\begin{equation}\label{eq:cov1a}
\EE\lc W(\vp) \, W(\psi) \rc
=  c_{1,H}\int_{\R_{+}^2\times\R^d}
 \widehat \varphi(s,\xi) \, \overline{\widehat \psi}   (r,\xi)
\, \prod_{i=1}^d |\xi_i|^{1-2H_i} \, \ga_0(s-r) dsdr   d\xi,
\end{equation}
where 
\begin{equation}\label{e:2.2} 
\ga_0(s-r) =H_0(2H_0-1) |s-r|^{2H_0-2}.
\end{equation}
  When $H_0=1/2$, we replace $\ga_0(s-r)$ by the 
Dirac delta function $\ga_0(s-r)=\delta_{\{0\}}(s-r)$: 
\begin{equation}\label{eq:cov1b}
\EE\lc W(\vp) \, W(\psi) \rc
=  c_{1,H}\int_{\R_{+}\times\R^d}
 \widehat \varphi(s,\xi) \, \overline{\widehat \psi}   (s,\xi)
\, \prod_{i=1}^d |\xi_i|^{1-2H_i} \, ds  d\xi,
\end{equation}
where the Fourier transforms $\widehat \varphi$ and $\widehat   \psi$ are understood as Fourier transforms in spatial variables  only and
\begin{equation}\label{eq:expr-c1H}
c_{1,H}= \frac 1 {(2\pi)^d } \prod_{i=1}^d \Gamma(2H_i+1)\sin(\pi H_i)  \,.
\end{equation}
  
As above throughout the remaining part of the paper we will always replace $\ga_0(s-r)$  
by $\delta_{\{0\}}(s-r)$ 
when $H_0=1/2$.
  
 Let  
 $$
 \HH_0=\left\{ \varphi  \in \mathcal{S}: \  \| \varphi \|_{\HH}:=
 \left( \int_{\RR_+^2\times \RR^d} 
     \widehat \varphi
 (s,\xi) \overline{\widehat \varphi}
 (r,\xi) |\xi|^{1-2H}d\xi \ga_0(s-r) ds dr \right)^{1/2} < \infty \right\}.
 $$
Since $\ga_0(s-r)$ is a positive definite kernel it  is well-known that 
$\|\cdot\|_\HH$ is a Hilbert norm (in fact it is the  $L^2$ norm of the 
stochastic integral $\int_{\RR_+\times \RR^d}  \varphi
 (s,x)W(ds,dx)$).  
   Let $ \HH$  be the completion of 
 $\HH_0$ under the above norm $\| \cdot \|_{\HH}$.   Using this Hilbert norm   
  we can define  the stochastic integration with respect to $W$. 
 
\begin{definition}\label{def:elementary-process}
For any $t\ge0$, let $\mathcal{F}_{t}$ be the $\sigma$-algebra generated by $W$ up to time $t$. An elementary process $g$ is a process given by
\begin{equation}
g(s,x)
=
\sum_{i=1}^{n} \sum_{j=1}^m X_{i,j} \, \upchi_{(a_{i},b_{i}]}(s) \, \upchi_{(h_j,l_{j}]}(x),\label{e.2.3}
\end{equation}
where $n$ and $m$ are finite positive integers, $-\infty<a_{1}<b_{1}<\cdots<a_{n}<b_{n}<\infty$, $h_j=(h_{j1}, \cdots, h_{jd})$    $h_{jk}<l_{jk}\ $, $\upchi_{(h_j,l_{j}]}(x)=\prod_{k=1}^d \upchi_{(h_{jk},l_{jk}]}(x_k)$,     and $X_{i,j}$ are ${\mathcal{F}} _{a_{i}}$-measurable random variables for $i=1,\ldots,n$. The integral of  such a process with respect to $W$ is defined as
\begin{eqnarray}
\int_{\RR_+\times \mathbb{R}^d}g(s,x) \, W(ds,dx)
&=&\sum_{i=1}^{n} \sum_{j=1}^m X_{i,j} \, W\lp \upchi_{(a_{i},b_{i}]} \otimes \upchi_{(h_j,l_{j}]}\rp   \label{eq:riemann-sums-W}\\
&=&\sum_{i=1}^{n} \sum_{j=1}^m X_{i,j} \,\big[W(b_{i},l_{j})-W(a_{i},l_{j}) -W(b_i,h_j)+ W(a_i, h_j)\big]\,. \nonumber
\end{eqnarray}

\end{definition}

The following result can be found in \cite{HHLNT1}  when $d=1$.  

\begin{proposition}\label{prop:intg-wrt-W}
Let $\laa_{{H}}$ be the space of predictable processes $g$ defined on $\R_{+}\times\R^d$ such that
almost surely $g\in \HH$ and $\EE[\|g\|_{\HH}^{2}]<\infty$. Then, 
we have the following statements.  

\noindent
\emph{(i)}
The space of elementary processes of the form  in Definition \ref{def:elementary-process} is dense in $\laa_{H}$.

\noindent
\emph{(ii)}
For $g\in\laa_{H}$, the stochastic integral $\int_{\RR_+\times \mathbb{R}^d}g(s,x) \, W(ds,dx)$ is defined as the $L^{2}(\oom)$-limit of $\int_{\RR_+\times \RR^d} g_n(s,x)W(ds,dx)$ for any $g_n$  approximating $g$, and we have
\begin{equation}\label{int isometry}
\EE\lc \lp \int_{\RR_+\times \mathbb{R}^d}g(s,x) \, W(ds,dx) \rp^{2} \rc
=
\EE \left[ \|g\|_{\HH}^{2}\right].
\end{equation}
\end{proposition}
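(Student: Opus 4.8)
The plan is to build the integral exactly as one builds the It\^o integral against a martingale measure, but with the Hilbert norm $\|\cdot\|_\HH$ playing the role usually played by the $L^2$ time--space norm. First I would establish the isometry \eqref{int isometry} for elementary processes $g$ of the form \eqref{e.2.3}, and then invoke part (i) to extend the integral to all of $\laa_H$ by continuity: the map $g\mapsto\int g\,dW$ is linear and, once the isometry is known on elementary $g$, isometric; since the elementary processes are dense in $\laa_H$ and $\laa_H$ is complete (being a closed subspace of $L^2(\oom;\HH)$, predictability being preserved under $L^2$-limits), the map extends uniquely to $\laa_H$, the extension is independent of the approximating sequence $g_n$, and \eqref{int isometry} passes to the limit. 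This delivers both the definition of the integral in (ii) and the identity simultaneously.

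For the isometry on elementary processes, write $g(s,x)=\sum_{i,j}X_{i,j}\,\upchi_{(a_i,b_i]}(s)\,\upchi_{(h_j,l_j]}(x)$ and set $\Delta_{i,j}W:=W(\upchi_{(a_i,b_i]}\otimes\upchi_{(h_j,l_j]})$, so that $\int g\,dW=\sum_{i,j}X_{i,j}\Delta_{i,j}W$ by \eqref{eq:riemann-sums-W}. Expanding the square and taking expectation produces terms indexed by $(i,j)$ and $(i',j')$. When $i\ne i'$, say $i<i'$, the increment $\Delta_{i',j'}W$ lives over $(a_{i'},b_{i'}]$ and, the noise being white in time, is independent of $\mathcal{F}_{a_{i'}}$; since the remaining three factors $X_{i,j}$, $X_{i',j'}$ and $\Delta_{i,j}W$ are all $\mathcal{F}_{a_{i'}}$-measurable, conditioning on $\mathcal{F}_{a_{i'}}$ and using $\EE[\Delta_{i',j'}W]=0$ kills the cross term. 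Only the diagonal-in-time terms $i=i'$ survive, where independence of the increments over $(a_i,b_i]$ from $\mathcal{F}_{a_i}$ factors the expectation as $\EE[X_{i,j}X_{i,j'}]\,\EE[\Delta_{i,j}W\Delta_{i,j'}W]$. Matching this against $\EE[\|g\|_\HH^2]$ computed from \eqref{eq:cov1b} is then routine: because $\ga_0=\delta_{\{0\}}$ concentrates the time integral on the diagonal, disjoint time intervals contribute nothing to $\|g\|_\HH^2$, so the norm likewise couples only coinciding time intervals and reproduces exactly the surviving sum.

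For the density statement (i), I would proceed by the usual reductions. First, by truncating the $X_{i,j}$ and cutting off in time, reduce to bounded predictable $g$ supported in a finite time window. Second, approximate in time by simple predictable processes --- e.g.\ by replacing $g$ on the cells of a shrinking partition with $\mathcal{F}_{a_i}$-measurable time-averages over the preceding cell, which keeps the approximants predictable --- and verify convergence in $\EE[\|\cdot\|_\HH^2]$; here the factorization $\HH\cong L^2(\R_+)\otimes\HH^{\mathrm{sp}}$ afforded by $\ga_0=\delta_{\{0\}}$ cleanly separates the time approximation from the spatial structure. Third, approximate the resulting piecewise-constant-in-time spatial coefficients, which lie in the spatial factor $\HH^{\mathrm{sp}}$, by linear combinations of rectangle indicators $\upchi_{(h_j,l_j]}$. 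The product form $\prod_{i=1}^d|\xi_i|^{1-2H_i}$ of the spatial covariance lets me treat each coordinate separately, so the one-dimensional density established in \cite{HHLNT1} applies coordinate by coordinate and tensorizes.

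The main obstacle will be the spatial approximation in the rough directions $H_i<1/2$, where the weight $|\xi_i|^{1-2H_i}$ grows at high frequency and $\HH^{\mathrm{sp}}$ is a fractional Sobolev-type space demanding a positive amount of regularity. One must check both that the rectangle indicators $\upchi_{(h_j,l_j]}$ actually belong to $\HH^{\mathrm{sp}}$ --- their Fourier transforms decay like $|\xi|^{-1}$, which is exactly enough for integrability since $H_i>0$ --- and that such step functions are \emph{dense} in this norm, a genuine fractional-Sobolev density fact that holds precisely because the effective regularity $s=(1-2H_i)/2$ lies in $(0,1/2)$. The time-regularization of the second reduction must likewise be verified to converge in $\|\cdot\|_\HH$ and not merely in an $L^2$ sense. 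These are exactly the points where the one-dimensional analysis of \cite{HHLNT1} is invoked, and the tensor-product factorization of \eqref{eq:cov1b} is what makes the passage to general $d$ straightforward once that one-dimensional density is in hand.
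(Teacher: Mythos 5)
The paper offers no proof of this proposition; it simply cites \cite{HHLNT1} (where the case $d=1$ is treated), so there is no in-paper argument to compare against line by line. Your outline is the standard construction and, for the white-in-time case $H_0=1/2$, it is essentially correct and matches what the cited reference does: isometry on elementary processes via conditioning on the filtration, density of elementary processes by time-discretisation plus approximation of the spatial coefficients by step functions in the fractional norm, then extension by continuity. Your checks that $\upchi_{(h_j,l_j]}$ lies in the spatial Hilbert space for all $H_i\in(0,1)$ and that step functions are dense there are exactly the points that need care when some $H_i<1/2$.

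The genuine gap is that your entire argument silently assumes $H_0=1/2$, while the proposition is stated (and is used in Definition \ref{def-sol-sigma} and in the chaos computations for Theorem \ref{t.1.2}) for all $H_0\in[1/2,1)$, where $\ga_0(s-r)=H_0(2H_0-1)|s-r|^{2H_0-2}$ is not a Dirac mass. For $H_0>1/2$ three of your steps fail. First, a future increment $\Delta_{i',j'}W$ is neither independent of $\mathcal F_{a_{i'}}$ nor conditionally centered given $\mathcal F_{a_{i'}}$ (fractional Brownian increments are correlated with the past), so the cross terms $i\ne i'$ do not vanish and the diagonal terms do not factor as $\EE[X_{i,j}X_{i,j'}]\,\EE[\Delta_{i,j}W\,\Delta_{i,j'}W]$; indeed, with the ordinary products in \eqref{eq:riemann-sums-W} the identity \eqref{int isometry} is simply false in general (take $g=X\,\upchi_{(a,b]}\otimes\upchi_{(h,l]}$ with $X$ a past increment of $W$: then $\EE[X^2(\Delta W)^2]=\EE[X^2]\EE[(\Delta W)^2]+2(\EE[X\Delta W])^2$). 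The sums must be read as Wick products $X_{i,j}\diamond\Delta_{i,j}W$, i.e.\ the integral is the Skorokhod divergence, as the introduction of the paper indicates; the isometry then acquires an extra Malliavin-derivative term which one must control or show to be compatible with the claimed identity. Second, the factorization $\HH\cong L^2(\R_+)\otimes\HH^{\mathrm{sp}}$, on which your density argument in time rests, is not available for $H_0>1/2$; the temporal factor is itself a fractional space with kernel $|s-r|^{2H_0-2}$, and the time-regularisation step has to be redone in that norm. Third, your completeness remark for $\laa_H$ is harmless, but the extension-by-continuity scheme only yields \eqref{int isometry} once the elementary-process case is settled, which for $H_0>1/2$ it is not by your computation. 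You should either restrict the statement you prove to $H_0=1/2$ and say so, or recast the whole construction in the divergence-operator framework for $H_0>1/2$.
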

We can also define the multiple integral using the above definition. 
\begin{eqnarray}
I_n(f)(t)
&=&  \int_{0\le s_1<\cdots<s_n\le t}\int_{\RR^{dn}}
f((s_1,x_1),  \cdots, (s_n, x_n))W(ds_1, dx_1)\cdots W(ds_n, dx_n)    \nonumber\\
 &=&  \frac1{n!}  \int_{([0, t]\times \RR^{d })^n}
f((s_1,x_1),  \cdots, (s_n, x_n))W(ds_1, dx_1)\cdots W(ds_n, dx_n)
 \,,\nonumber\\
\end{eqnarray} 
where $f((s_1,x_1),  \cdots, (s_n, x_n))\in \HH^{\otimes n}$ is symmetric with respect to its $n$ arguments.  We have
\begin{eqnarray}
\EE\left[ \left(I_n(f) (t)\right)^2 \right]
&=&  \frac{1}{n!} \int_{ [0,t]^{2n} }\int_{\RR^{dn}} 
\widehat f((s_1,\xi_1),  \cdots, (s_n, \xi_n)) 
\overline{\widehat f}  ((r_1,\xi_1),  \cdots, (r_n, \xi_n))  \nonumber\\
&&\qquad \prod_{i=1}^n\ga_0(s_i-r_i)  \prod_{i=1}^n \prod_{k=1}^d |\xi_{ik}|^{1-2H_k}  d\xi_1\cdots d\xi_n  ds_1\cdots ds_n dr_1 \cdots dr_n \,, \nonumber\\ 
&=&  \int_{{0\le s_1<\cdots<s_n\le t\atop
0\le r_1,\cdots, r_n\le t}}\int_{\RR^{dn}} 
\widehat f((s_1,\xi_1),  \cdots, (s_n, \xi_n)) 
\overline{\widehat f}  ((r_1,\xi_1),  \cdots, (r_n, \xi_n))  \nonumber\\
&&\qquad  \prod_{i=1}^n\ga_0(s_i-r_i)  \prod_{i=1}^n \prod_{k=1}^d |\xi_{ik}|^{1-2H_k}  d\xi_1\cdots d\xi_n dsdr   \,,  \label{e.2.8} 
\end{eqnarray} 
where $\widehat f$ is the Fourier transform with respect to $n$ spatial variables $x_1, \cdots, x_n$; $\xi_i=(\xi_{i1}, \cdots, \xi_{id})$; $d\xi_i=d\xi_{i1}\cdots d\xi_{id}$,
$ds=ds_1\cdots ds_n$, $dr=dr_1\cdots dr_n$.   Notice that in \eqref{e.2.8} we do not force an order for $r_1, \cdots, r_n$. 

We  also need the following lemma which can be found in \cite[Lemma 4.5]{HHNT}. 
\begin{lemma}\label{lemsimplex}
Let $\alpha \in (-1+\ep , 1)^m$  with $\ep>0$. Denote
 $|\alpha |= \sum_{i=1}^m
\alpha_i  $  and 
$T_m(t)=\{(r_1,r_2,\dots,r_m) \in \R^m: 0<r_1  <\cdots < r_m < t\}$.
Then there is a constant $\kappa$, depending only on $\ep$  such that
\[
J_m(t, \alpha):=\int_{T_m(t)}\prod_{i=1}^m (r_{\si(i)}-r_{i-1})^{\alpha_i}
dr \le \frac { \kappa^m t^{|\alpha|+m } }{ \Gamma(|\alpha|+m +1)},
\]
where by convention, $r_0 =0$.
\end{lemma}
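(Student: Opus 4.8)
The plan is to reduce the estimate to a classical Dirichlet (Beta-function) integral over a solid simplex and then bound the resulting Gamma factors uniformly in terms of $\ep$.

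First I would observe that on $T_m(t)$ the coordinates satisfy $0<r_1<\cdots<r_m<t$, so the base $r_{\si(i)}-r_{i-1}$ is strictly positive, and hence the real power $(r_{\si(i)}-r_{i-1})^{\alpha_i}$ well defined, only when $\si(i)>i-1$, i.e. $\si(i)\ge i$. Requiring this for every $i$ forces $\si(m)=m$, then $\si(m-1)=m-1$, and so on down to $\si=\mathrm{id}$. Thus the only meaningful case is $\si=\mathrm{id}$, for which the integrand is the product of consecutive increments $\prod_{i=1}^m (r_i-r_{i-1})^{\alpha_i}$ with the convention $r_0=0$; this is the case I would treat.

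Next I would perform the change of variables $u_i=r_i-r_{i-1}$ for $i=1,\dots,m$. This is a bijection of $T_m(t)$ onto $\{u\in\RR^m:\ u_i>0,\ \sum_{i=1}^m u_i<t\}$ whose Jacobian matrix $(\partial u_i/\partial r_j)$ is lower triangular with unit diagonal, so the Jacobian determinant is $1$. Hence $J_m(t,\alpha)=\int_{\{u_i>0,\ \sum u_i<t\}}\prod_{i=1}^m u_i^{\alpha_i}\,du$, and the further substitution $u_i=tv_i$ extracts the scaling and gives $J_m(t,\alpha)=t^{|\alpha|+m}\int_{\{v_i>0,\ \sum v_i<1\}}\prod_{i=1}^m v_i^{\alpha_i}\,dv$; each one-dimensional marginal converges near $v_i=0$ precisely because $\alpha_i>-1+\ep>-1$. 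The core step is then the unit-simplex (Dirichlet) integral: iterating the Beta identity $\int_0^1 x^{a-1}(1-x)^{b-1}\,dx=\Gamma(a)\Gamma(b)/\Gamma(a+b)$ (equivalently, using $\prod_i\Gamma(\alpha_i+1)=\int_{(0,\infty)^m}\prod_i u_i^{\alpha_i}e^{-\sum u_i}\,du$ and integrating out the total mass) yields $\int_{\{v_i>0,\ \sum v_i<1\}}\prod_{i=1}^m v_i^{\alpha_i}\,dv=\prod_{i=1}^m\Gamma(\alpha_i+1)\big/\Gamma(|\alpha|+m+1)$. Finally, since $\alpha_i\in(-1+\ep,1)$ forces $\alpha_i+1\in(\ep,2)$ and $\Gamma$ is continuous and positive on the compact interval $[\ep,2]$, I would set $\kappa:=\max\{1,\ \max_{x\in[\ep,2]}\Gamma(x)\}$, which depends only on $\ep$; then $\prod_{i=1}^m\Gamma(\alpha_i+1)\le\kappa^m$ and the claimed bound follows.

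The routine steps (the unit Jacobian, the Beta/Dirichlet iteration, and the $\Gamma$-factor bound) are standard. The only points genuinely requiring care are the interpretation of the permutation $\si$ — where well-definedness of the real powers pins $\si$ to the identity — and the uniformity of the constant $\kappa$ over all admissible exponent vectors $\alpha$. The latter is exactly what the hypothesis $\alpha_i>-1+\ep$ provides: it simultaneously guarantees integrability at the faces $\{u_i=0\}$ and confines each argument $\alpha_i+1$ to the compact set $[\ep,2]$ on which $\Gamma$ is bounded, so that the $\ep$-dependent constant $\kappa$ is genuinely independent of $m$, $t$, and the particular $\alpha$.
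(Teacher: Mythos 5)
Your argument is correct. Note that the paper does not actually prove this lemma --- it is imported verbatim from \cite[Lemma 4.5]{HHNT} --- so there is no internal proof to compare against; your chain (unit-Jacobian change to the increment variables $u_i=r_i-r_{i-1}$, scaling out $t^{|\alpha|+m}$, the Dirichlet integral $\int_{\{v_i>0,\ \sum_i v_i<1\}}\prod_{i=1}^m v_i^{\alpha_i}\,dv=\prod_{i=1}^m\Gamma(\alpha_i+1)\big/\Gamma(|\alpha|+m+1)$, and the uniform bound $\Gamma(\alpha_i+1)\le\kappa$ valid because $\alpha_i+1\in(\ep,2)$) is exactly the standard argument behind that reference, and every step checks out, including the claim that $\kappa$ depends only on $\ep$. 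Your handling of the permutation is also sensible: the $\si$ in the statement is a vestige of the notation in \eqref{e.4.1}--\eqref{e.4.8}, and at the one place the lemma is invoked the increments are consecutive, i.e.\ $\si=\mathrm{id}$; your observation that well-definedness of the (possibly negative) real powers forces $\si(i)\ge i$ and hence $\si=\mathrm{id}$ disposes of the ambiguity cleanly, and even under the alternative reading $r_{\si(i)}-r_{\si(i)-1}$ the integrand is just a relabelling of the consecutive increments, so your computation applies with the exponents permuted and the same bound results.
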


\section{Necessary Condition}\label{S:3} 

In this section we shall prove part (ii), the necessary part,  of Theorems \ref{t.1.1} and \ref{t.1.2}, namely, the necessity of
\eqref{e:1.4} and \eqref{e.1.9}, respectively. 

First, we give the meaning of the 
  (random field) solution to equation \eref{spde} in the following  definition.

\begin{definition}\label{def-sol-sigma}
  A  real-valued predictable stochastic process $u=\{u(t,x), 0 \leq t <\infty, x \in \mathbb{R}^d
\}$ is said to be a  {\it    (global) random field  solution}  of \eref{spde} if
\begin{enumerate}
\item[(i)]  for all $t\in[0, \infty)$ and $x\in\R^d$,  the process 
$ (s, y) \mapsto p_{t-s}(x-y) u(s,y)  \upchi_{[0,t]}(s)$ 
is an element of $\laa_{H}$, where $p_t(x)=(2\pi t)^{-d/2}
\exp\left( -\frac{|x|^2}{2t}\right)$ is the heat kernel on the real line  associated with 
 $\frac{1}{2}\Delta$. 
 
 \item[(ii)] for all $t \in [0,\infty)$ and $x\in \mathbb{R}^d$ we have
\begin{equation}\label{e.3.1}
u(t,x)= p_t*u_0(x) + \int_0^t \int_{\mathbb{R}^d}p_{t-s}(x-y) u(s,y)  W(ds,dy) \quad a.s.,
\end{equation}
where the stochastic integral is understood in the sense of Proposition \ref{prop:intg-wrt-W}.
\end{enumerate}
 
\smallskip

 A  real-valued   stochastic process $ u(t,x)$ 
 is said to be a  {\it  local (random field)  solution } of \eref{spde} if there is some constant $t_0>0$
 so that $u(t, x)$ is defined on $[0, t_0) \times \R^d$ and satisfies all the above property 
 for $(t, x)\in [0, t_0) \times \R^d$.

 \smallskip
 
 We say a   random field  solution $u(t, x)$ of \eref{spde}  is in $L^2(\Omega, \mathcal{F},  \PP)$
if $\EE [ u(t, x)^2] <\infty$ for every   $t\geq 0$ and $x\in \R^d$ (for local solution, 
replace $t\geq 0$ by $t\in [0, t_0)$).
\end{definition}

Repeatedly using  this definition, we  see that the solution of \eref{spde} has the following {  Wiener } chaos expansion
\begin{equation}\label{e.3.2} 
u(t,x)=\sum_{n=0}^\infty 
u_n(t,x),
\end{equation}
where 
\begin{equation} \label{e:3.3} 
u_n(t,x)=I_n(f_n^{(t,x)}) (t)
\end{equation}
with 
\begin{eqnarray} 
f_n^{(t,x)} 
&:=& f_n^{(t,x)}( s_1,x_1,\dots,s_n,x_n )= f_n^{(t,x)}  ((s_1,x_1),\dots, (s_n,x_n) )\nonumber \\
&=&  p_{t-s_n}(x-x_n)p_{s_n-s_{n-1}}(x_ n-x_{n-1})\cdots p_{s_2-s_1}(x_ 2-x_1)
p_{s_1}u_0(x_1)\,,  \label{e.3.3}
\end{eqnarray} 
 (see, for instance,  formula (4.4) in \cite{HN} or  formula (3.3) in \cite{HHNT}).
 The remainder in \eqref{e.3.2} will go  to zero since  we can expand it first  for finitely
  many terms to deduce
$ \EE [ u(t, x)^2] \geq \sum_{n=0}^\infty \EE \left[ I_2(f_n^{ (t, x)})(t)^2 \right]$.  
 On the other hand, $u(t, x)$ defined by \eqref{e.3.2} is a solution as long as 
  it converges in $L^2$. 

By comparison, without loss of generality we assume $u_0(x)=1$ throughout the remaining part of this paper.  
The Fourier transform of $f_n$ (with respect to the $n$ ($d$ dimensional) spatial variables  
is
\begin{equation}
\widehat f_n(t,x, s_1, \xi_1, \cdots, s_n, \xi_n)
=\prod_{i=1}^n  e^{-\frac{1}{2} (s_{i+1}-s_i)|\xi_i+\cdots+\xi_1|^2} e^{-{  i}  x(\xi_n+\cdots+\xi_1)}\,, 
\label{e.3.4} 
\end{equation}
when  $0<s_1<\cdots<s_n<s_{n+1}=t$, where we denote $s_{n+1}=t$  (see \cite[3.13-3.14]{hule2019} or \cite[p.8]{HHLNT2}).  In general, we have
\begin{equation}
\widehat f_n(t,x, s_1, \xi_1, \cdots, s_n, \xi_n)
=\prod_{i=1}^n  e^{-\frac{1}{2} (s_{\si(i+1)}-s_{\si(i)})
|\xi_{\si(i)}+\cdots+\xi_{\si(1)}|^2} e^{-  i  x(\xi_n+\cdots+\xi_1)}\,,
\label{e.3.4a} 
\end{equation}
where $0<s_{\si(1)} <\cdots <s_{\si(n)}<s_{\si(n+1)}=t$.  
 
 \medskip 
  
 We will need the following  elementary  lemma.

\begin{lemma}\label{l.3.2}  Let $\vare>0$ and let $0<\al, \be<1$ with $\al+\be 
 >1$. 
 There is a constant $c\geq 1$ independent of $\vare>0$ so that for all $x\in (0, 3\vare )$, 
\begin{equation}\label{e:3.6}
c^{-1}   x^{1-(\al+\be)}  \leq \int_0^\vare u^{-\al} (u+x)^{-\be} du\leq c  x^{1-(\al+\be)}   .
\end{equation}
\end{lemma}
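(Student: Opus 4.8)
The plan is to remove the dependence on $\vare$ and $x$ by a scaling substitution, reducing the integral to one whose limits of integration involve only the ratio $\vare/x$, and then to exploit the convergence of the resulting integral over $(0,\infty)$ together with monotonicity.

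First I would substitute $u = xv$, so that $du = x\,dv$ and a direct computation gives
\[
\int_0^\vare u^{-\al}(u+x)^{-\be}\,du = x^{1-(\al+\be)}\int_0^{\vare/x} v^{-\al}(1+v)^{-\be}\,dv .
\]
Writing $g(M) := \int_0^M v^{-\al}(1+v)^{-\be}\,dv$, it then remains to show that $g(\vare/x)$ is bounded above and below by positive constants depending only on $\al$ and $\be$. The hypothesis $x < 3\vare$ is exactly the statement that $M := \vare/x > 1/3$, so this is precisely where that restriction enters, and it is what makes the bound uniform across the whole range $x\in(0,3\vare)$, including the case $x>\vare$ where a naive splitting of the domain $\int_0^\vare = \int_0^x + \int_x^\vare$ would be awkward.

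For the upper bound I would observe that $g$ is increasing and that the improper integral $g(\infty)=\int_0^\infty v^{-\al}(1+v)^{-\be}\,dv$ converges: near $v=0$ the integrand behaves like $v^{-\al}$ with $\al<1$, and near $v=\infty$ like $v^{-(\al+\be)}$ with $\al+\be>1$, so both conditions in the hypothesis are used here (indeed $g(\infty)=B(1-\al,\,\al+\be-1)$). Hence $g(M)\le g(\infty)=:c_1<\infty$ for every $M>0$. For the lower bound, since the integrand is strictly positive and $g$ is increasing, $M>1/3$ yields $g(M)\ge g(1/3)=:c_2>0$, where $c_2>0$ because the integrand is positive and integrable on $(0,1/3)$.

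Combining the two estimates gives
\[
c_2\, x^{1-(\al+\be)} \;\le\; \int_0^\vare u^{-\al}(u+x)^{-\be}\,du \;\le\; c_1\, x^{1-(\al+\be)}
\]
for all $x\in(0,3\vare)$, and taking $c := \max\{c_1,\,1/c_2,\,1\}\ge 1$ yields \eqref{e:3.6}. I do not anticipate a genuine obstacle: the only point requiring care is the finiteness of $g(\infty)$, which is exactly where both hypotheses $\al<1$ and $\al+\be>1$ are invoked, together with the recognition that the scaling substitution is what produces a constant $c$ independent of $\vare$.
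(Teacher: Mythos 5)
Your proposal is correct and follows exactly the paper's route: the same substitution $u=xv$ reducing the integral to $x^{1-(\al+\be)}\int_0^{\vare/x}v^{-\al}(1+v)^{-\be}\,dv$, after which the paper simply asserts the conclusion while you supply the (correct) details — monotonicity, convergence of the improper integral via $\al<1$ and $\al+\be>1$, and the lower bound from $\vare/x>1/3$.
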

 
\begin{proof} By a change of variable $u=xv$, 
$$
\int_0^\vare u^{-\al} (u+x)^{-\be} du = x^{1-(\al + \be)} \int_0^{\vare/x}   v^{-\al} (1+v)^{-\be} dv .
$$
The   desired conclusion \eqref{e:3.6} follows from this.   
  \end{proof} 
 \medskip

In the following, we use $C_H$ to denote a positive constant depending on $H=(H_1, \cdots, H_d)$ as well as the dimension 
 $d\geq 1$, whose exact value is unimportant and may change from line to line. 
   For two non-negative functions $f$ and $g$,
  notation $f\asymp g$ means that there is a constant $c\geq 1$ so that $c^{-1} f \leq g\leq c f $
  on a specified common definition domain of $f$ and $g$. 
  
\medskip 
\noindent {\it Proof of Part  (ii)  of Theorems
\ref{t.1.1} and  \ref{t.1.2}}.\ \ 
First,  we consider
 the one dimensional case $d=1$ with $H_0>1/2$.  Denote $H=H_1$.

 Let us consider  the second chaos in 
\eqref{e.3.2}.  From now on we denote  
$ I_n(f_2^{t,0}) = I_n(f_2^{t,0})(t)$. 
\begin{eqnarray}
\EE \left[   I_2(f_2^{t,0}) ^2 \right] 
&=& \frac12    \int_{{0\le s_1  < s_2\le t\atop
 0\le   r_1< r_2\le t}   }\int_{\RR^2}
 e^{-  \frac12 (t-s_2+t-r_1  )|\xi_2+ \xi_1|^2-\frac12 (s_2-s_1+r_2-r_1)|\xi_1|^2
 }\nonumber\\
 &&\qquad 
 |\xi_1|^{1-2H}|\xi_2|^{1-2H}d\xi_1 d\xi_2 \ga_0(s_2-r_2) \ga_0(s_1-r_1) ds_1ds_2
 dr_1dr_2\nonumber\\
 &&\quad + \frac12    \int_{{0\le s_1  < s_2\le t\atop
 0\le   r_2< r_1\le t}   }\int_{\RR^2}
 e^{-  \frac12 (t-s_2+t-r_2)|\xi_2+ \xi_1|^2- \frac{s_2-s_1 }{2} |\xi_1|^2-\frac{r_1-r_2}{2}|\xi_2|^2}\nonumber\\
 &&\qquad 
 |\xi_1|^{1-2H}|\xi_2|^{1-2H}d\xi_1 d\xi_2 \ga_0(s_2-r_2) \ga_0(s_1-r_1) ds_1ds_2
 dr_1dr_2\nonumber\\
 &\ge & \frac12 \int_{{0\le s_1\le s_2\le t\atop 0\le r_1<r_2\le t}}g(s_1, s_2, r_1, r_2 ) \ga_0(s_2-r_2) \ga_0(s_1-r_1)  dr_1dr_2 ds_1ds_2 \,,\nonumber\\
 \label{e.3.6} 
\end{eqnarray}
where 
\[  
g(s_1, s_2, r_1, r_2 )=\int_{\RR^2}
 e^{-  \frac12 (t-s_2+t-r_1  )|\xi_2+ \xi_1|^2-\frac12 (s_2-s_1+r_2-r_1)|\xi_1|^2
 }
 |\xi_1|^{1-2H}|\xi_2|^{1-2H}d\xi_1 d\xi_2  . 
\]
Making substitution $\eta_1=\xi_1$ and $\eta_2=\xi_1+\xi_2$, we have for $0<s_1<s_2<t$
and $0<r_1<r_2<t$, 
\begin{eqnarray}
g(s_1, s_2, r_1, r_2 )
&=&\int_{\RR^2}
 e^{- \frac12  (t-s_2+t-r_2)|\eta_2|^2-\frac12 (s_2-s_1+r_2-r_1)|\eta_1|^2} 
 |\eta_1|^{1-2H}|\eta_2-\eta_1|^{1-2H}d\eta_1 d\eta_2\nonumber\\
&=&\frac{C_H}{\sqrt{(t-s_2+t-r_2)(s_2-s_1+s_2-s_1)}}   \EE \Bigg[ 
\left|\frac{X_1 }{\sqrt{s_2-s_1+r_2-r_1}}
\right|^{1-2H}\nonumber\\
&&\qquad\quad  \times \left|\frac{X_2}{\sqrt{t-s_2+t-r_2}}-\frac{X_1 }{\sqrt{s_2-s_1+r_2-r_1}}\right|^{1-2H}\Bigg]\,,\nonumber\\
&= & C_H  (t-s_2+t-r_2)^{H-1 }(s_2-s_1+r_2-r_1)^{2H-3/2}  \nonumber\\
&& \quad  \times \EE \left[ 
\left| X_1 ( \sqrt{s_2-s_1+r_2-r_1} X_2 -\sqrt{t-s_2+t-r_2} X_1)\right|^{1-2H}\right] \nonumber\\
&=& C_H  (t-s_2+t-r_2)^{H-1 }(s_2-s_1+r_2-r_1)^{2H-3/2} (t-s_1+t-r_1)^{1/2-H} \nonumber\\
&&\qquad\quad  \times \EE \left[ 
\left| X_1 \left( \frac{\sqrt{s_2-s_1+r_2-r_1}}{\sqrt{t-s_1+t-r_1}} X_2 -\frac{\sqrt{t-s_2
+t-r_2}}{\sqrt{t-s_1+t-r_1}} X_1\right)\right|^{1-2H}\right] ,  \nonumber \\
\label{e:3.8}
 \end{eqnarray}
  where $X_1$ and $X_2$ are two independent standard Gaussian  
random variables.   Denote
\[
f(\lambda):= \EE \left[ |X_1 (\lambda X_1-\sqrt{1-\lambda^2}   X_2)|^{1-2H}\right]
\,,\quad \lambda \in [0, 1]\,.
\]
We claim that   
\begin{equation} \label{e:3.9}
 \min_{\lambda \in [0, 1]} f(\lambda )>0 \quad \hbox{for any } 0 < H <1.
 \end{equation} 
 First   note that for a standard Gaussian random variable $Z$ and $a\in \R$, 
by  the Hardy-Littlewood (symmetric rearrangement) inequality, 
\begin{eqnarray} \label{e:3.10} 
\EE \left[ |Z-a|^{1-2H} \right]&=&\frac{1}{\sqrt{2\pi}} \int_{-\infty}^\infty |z-a|^{1-2H} e^{-|z|^2/2} dz  \nonumber \\
&\leq& \frac{1}{\sqrt{2\pi}} \int_{-\infty}^\infty |z |^{1-2H} e^{-|z|^2/2} dz  = \EE \left[ |Z|^{1-2H} \right].
\end{eqnarray}
Taking conditional expectation on $\sigma (X_1)$ and then 
using \eqref{e:3.10}, we have for any $0<H<1$, 
\begin{eqnarray*}
f(\lambda) &=& \EE \left[ |X_1|^{1-2H} \, \EE \left(  |\lambda X_1-\sqrt{1-\lambda^2}   X_2)|^{1-2H} \big| \sigma (X_1) \right) \right] \\
&\leq &    (1-\lambda^2)^{1/2 -H}  \, \EE \left[ |X_1|^{1-2H} \right] \, \EE \left[ |X_2|^{1-2H} \right] <\infty. 
\end{eqnarray*}
We conclude by the dominated convergence theorem that $f(\lambda)$ is a positive continuous function on $[0, 1)$ with
$f(0)=   \left(  \EE \left[ |X_1   |^{1-2H}\right] \right)^2 \in (0, \infty)$. 
{We need to consider  the behavior of $f (\lambda) $ near $\lambda =1$. } 

When $0<H<3/4$,  {$f$ is a continuous function on $[0, 1]$ with    $f(1)= \EE \left[ |X_1|^{2-4H}\right]
  \in (0, \infty)$ and so $f$ is bounded between two positive constants on $[0, 1]$. In particular,  we have \eqref{e:3.9}.} 
When $3/4\le H<1$, by  Fatou's lemma,  
 $$ \liminf_{\lambda \to 1-} f(\lambda) { \geq f(1) }
 = \EE \left[ |X_1|^{2(1-2H)} \right]  =\infty.
 $$ 
This establishes the claim \eqref{e:3.9}
for all $H\in [0, 1]$. 
Consequently, we have by \eqref{e:3.8} that for any $H\in (0, 1)$, 
there is a constant $C_H>0$ so that 
\begin{equation} \label{e:3.11}
g(s_1, s_2, r_1, r_2 ) \geq  C_H (t-s_1+t-r_1)^{1/2-H}  (t-s_2+t-r_2)^{H-1 }(s_2-s_1+r_2-r_1)^{2H-3/2} \,. 
\end{equation}   
 In order for  $\EE \left[   I_2(f_2^{t,0})(t)^2 \right]$ to be finite,  
  by  \eqref{e.3.6} and \eqref{e:3.11} 
  the following integral must be finite:
\begin{eqnarray} \label{e:3.13}
&&
\Upsilon:=\int_{{0\le s_1<s_2\le t\atop 0\le r_1<r_2\le t}} 
(t-s_1+t-r_1)^{1/2-H}  (t-s_2+t-r_2)^{H-1 }(s_2-s_1+r_2-r_1)^{2H-3/2}\nonumber\\
&&\qquad\qquad  \times   |s_2-r_2|^{2H_0-2}|s_1-r_1|^{2H_0-2}    dr_1dr_2 ds_1ds_2 \,. 
\end{eqnarray} 
  It is obvious  that
  { 
\begin{eqnarray} 
\Upsilon&\ge & \int_{    0\le r_1<s_1<r_2<s_2\le t } 
(t-s_1+t-r_1)^{1/2-H}  (t-s_2+t-r_2)^{H-1 }(s_2-s_1+r_2-r_1)^{2H-3/2}\nonumber\\
&&\qquad\qquad   \times  |s_2-r_2|^{2H_0-2}|s_1-r_1|^{2H_0-2}    dr_1dr_2 ds_1ds_2 \nonumber\\
&\ge& \int_{    0\le r_1<s_1<r_2<s_2\le t } 
 (t-s_2+t-r_2)^{ -1 /2}(s_2-s_1+r_2-r_1)^{2H-3/2}\nonumber\\
&&\qquad\qquad   \times |s_2-r_2|^{2H_0-2}|s_1-r_1|^{2H_0-2}    dr_1dr_2 ds_1ds_2 \,. 
\label{e.3.8} 
\end{eqnarray} 
Making substitution from $r_1, s_1, r_2$ to $u=s_1-r_1$, $v=r_2-s_1$, $w=s_2-r_2$,
 we have
\begin{eqnarray} 
\Upsilon 
&\ge& \int_{  u,v,w>0,  u+v+w< s_2<t } 
 (2t-2s_2+w)^{ -1 /2}(u+2v+w)^{2H-3/2}
   w^{2H_0-2}u^{2H_0-2}    dudvdwds_2 \nonumber\\
 &\ge &  \int_{    v,w>0,  \,  v+w\le  s_2 /2,  \, s_2<t }  \left(\int_0^{s_2/2}  (u+2v+w)^{2H-3/2} u^{2H_0-2} du \right)
\nonumber\\ &&\qquad\qquad   \times 
   (2t-2s_2+1)^{ -1 /2} w^{2H_0-2} dw dv ds_2
 \nonumber \\
&\ge &  c_1\int_{    v,w>0, \,  v+w\le s_2/2, \, s_2<t } 
 (2t-2s_2+1)^{ -1 /2}(2v+w)^{2H_0 +2H-5/2}
    w^{2H_0-2}    dvdwds_2 \nonumber\\
    &\ge & c_1\int_{    0<  v \le s_2/4, \, s_2<t } 
 (2t-2s_2+1)^{ -1 /2} \left( \int_0^{s_2/4} (2v+w)^{2H_0 +2H-5/2}
    w^{2H_0-2}    dw \right) dvds_2 \nonumber\\
&\ge  &  c_1^2 \int_{   0<  v \le s_2/4, \, s_2<t }  
 (2t-2s_2+1)^{ -1 /2}  v  ^{4H_0 +2H-7/2}    dv ds_2 , \label{e.3.11} 
\end{eqnarray} 
where in the third and fifth inequality we used Lemma  \ref{l.3.2}.
So $\Upsilon <\infty \,$  implies  the integral in \eqref{e.3.11} is finite, which happens  }
  only when $4H_0 +2H-7/2>-1$, that is,  
\begin{equation}
4H_0 +2H>5/2\,. 
\end{equation}   
This proves 
  Theorem \ref{t.1.2}(ii) when $d=1$.  

When $H_0=1/2$, the equation \eqref{e:3.13}    becomes
\begin{eqnarray} \label{e:3.17a}
\Upsilon= { 2^{2H-2} } 
 \int_{    0  <s_1 <s_2\le t } 
(t-s_1)^{1/2-H}  (t-s_2 )^{H-1 }(s_2-s_1 )^{2H-3/2} ds_1ds_2 \,. 
\end{eqnarray} 
This integral is finite only when the exponent $2H-3/2$ of $s_2-s_1$ 
in \eqref{e:3.17a} is  larger than $-1$. This requires  $H>1/4$.  Notice  that by formally letting $H_0=1/2$ in
$4H_0 +2H>5/2$ one also obtains  $H>1/4$. 
This proves  
 Theorem \ref{t.1.1}(ii) when $d=1$.   

\medskip

Next we  consider the case that the dimension $d\ge 2$. We still consider the second chaos. As for the one dimensional case we have
  \begin{eqnarray}
\EE \left[  (I_2(f_2))^2 \right] 
&\ge & \frac12 \int_{{0\le s_1\le s_2\le t\atop 
0\le r_1\le r_2\le t} }\int_{\RR^{2d} }
 e^{-  \frac12(t-s_2+t-r_2)|\xi_2+ \xi_1|^2-\frac12 (s_2-s_1+r_2-r_1 )|\xi_1|^2}\nonumber\\
 &&\qquad 
  \times \prod_{k=1}^d |\xi_{1k}|^{1-2H_k}|\xi_{2k} |^{1-2H_k}  \ga_0(s_1-r_1)\ga_0(s_2-r_2) d\xi_1 d\xi_2 ds_1ds_2 dr_dr_2\nonumber\\
 &=& \int_{{0\le s_1\le s_2\le t\atop 
0\le r_1\le r_2\le t} }\prod_{k=1}^d g_k(s_1, s_2, r_1, r_2)  \ga_0(s_1-r_1)\ga_0(s_2-r_2) ds_1ds_2 dr_1dr_2\,,  \nonumber\\
 \label{e.3.13} 
\end{eqnarray}
where 
 \begin{equation}\label{e.3.14} 
g_k(s_1, s_2, r_1, r_2) = 
\int_{\RR^2}
 e^{-  \frac12(t-s_2+t-r_2)|\eta_2|^2 -\frac12 (s_2-s_1+r_2-r_1)|\eta_1|^2} 
 |\eta_1|^{1-2H_k}|\eta_2-\eta_1|^{1-2H_k}d\eta_1 d\eta_2 , 
 \end{equation} 
which can be estimated by using   \eqref{e:3.11}. Thus, we have 
\begin{eqnarray}
\EE \left[ (I_2(f_2))^2 \right] 
&\ge & C \int_{{0\le s_1\le s_2\le t\atop 
0\le r_1\le r_2\le t} } (t-s_1+t-r_1)^{d/2-{H^*} }  (t-s_2+t-r_2)^{{H^*} -d }\nonumber\\
&&  \times (s_2-s_1+r_2-r_1)^{2{H^*} -3d/2}  |s_1-r_1|^{2H_0-2} |s_2-r_2|^{2H_0-2}  ds_1ds_2 dr_1dr_2 \nonumber \\
 \label{e:3.16}
 \end{eqnarray}
With the same argument as for \eqref{e.3.11}, we see that the above integral is finite  only if
\[
2{H^*} -\frac{3d}{2}+4H_0-2>-1\,.
\]
This proves part (ii), the necessary part, of Theorem \ref{t.1.2} for  $d\geq 2$.

\smallskip

When $H_0=1/2$,   the  inequality \eqref{e:3.16}
becomes 
\begin{eqnarray}
\EE \left[ (I_2(f_2))^2 \right] 
&\ge & C \int_{ 0\le s_1\le s_2\le t } (t-s_1 )^{d/2-{H^*} }  (t-s_2 )^{{H^*} -d } (s_2-s_1 )^{2{H^*} -3d/2}     ds_1ds_2  \nonumber \\
&=& C\int_0^t (t-s_1)^{d/2-{H^*} }  \left[\int_{s_1}^{t}  (t-s_2)^{{H^*} -d }(s_2-s_1)^{2{H^*} -3d/2}  ds_2 \right]  ds_1    \nonumber \\
 &=& CB({H^*} -d+1,2{H^*} -\frac{3d}{2}+1) \int_0^t (t-s_1)^{-d/2} (t-s_1)^{2{H^*} -\frac{3d}{2}+1} ds_1 
 \nonumber \\
&=&  CB({H^*} -d+1,2{H^*} -\frac{3d}{2}+1) \int_0^t (t-s_1)^{2{H^*} -2d+1} ds_1, 
\label{e:3.17}
\end{eqnarray}
where $B$ is the beta function  and where we see that 
  $\EE \left[ (I_2(f_2))^2 \right]<\infty$ if and only if  when  $2{H^*} -3d/2>-1$ and ${H^*} -d>-1$.
Note that for $d\ge 2$, ${H^*} -d>-1$ implies that
  $2{H^*} -3d/2+1=2({H^*} -d)+(d/2)+1>0$.
  This completes the  proof of    part (ii), the necessary part, of Theorem \ref{t.1.1} for $d\ge 2$.

\section{Sufficient Condition}\label{S:4} 

\noindent In this section,  we prove part (i), the sufficient part,  of Theorems \ref{t.1.1} and \ref{t.1.2}.  
   It suffices to consider the case that $d\geq 2$, as when $d=1$ conditions \eqref{e:1.4} and \eqref{e:1.7} 
  coincide with  \eqref{e:1.3} so the result for $d=1$ follows from  \cite[Theorems 1.2 and  1.3]{chen}.  

 Recall that we take $u_0(x)=1$ on $\R^d$.  Let $u_n(t, x)$ be defined as in \eqref{e:3.3}.  
   We compute the $L^2(\Omega, \mathcal{F}, \PP)$ norm of each $u_n(t, x)$.   
For $n\geq 0$,    we have by \eqref{e.2.8} 
that  for $0<s_1<\cdots<s_n<t$ and $0<r_{\si(1)}<\cdots <r_{\si(n)}<t$, 
\begin{eqnarray}
&& \EE \left[ u_n^2(t,x) \right]  \nonumber\\
&=& \int_{{0<s_1<\cdots<s_n<t\atop 0< r_1, \cdots, r_n<t}
}\int_{\RR^{nd}}
\prod_{i=1}^n e^{-(s_{i+1}-s_i+r_{\si(i+1)}-r_{\si(i)})|\xi_i+\cdots+\xi_1|^2} 
 \prod_{k=1}^d 
|\xi_{ik}|^{1-2H_k}d\xi_1\cdots d\xi_n \nonumber\\
&&\qquad 
 \times \prod_{i=1}^n \ga_0(s_i-r_i) ds_1dr_1\cdots ds_ndr_n\nonumber\\
&=& \int_{[0, t]^{2n}} \prod_{k=1}^d g_k(s_1, \cdots, s_n, r_1, \cdots, r_n) 
\prod_{i=1}^n \ga_0(s_i-r_i) ds_1dr_1\cdots ds_ndr_n\,,
\label{e.4.1} 
\end{eqnarray}
where   $s_{n+1}=r_{n+1}:=t$ and 
\begin{eqnarray}
&&g_k(s_1, \cdots, s_n, r_1, \cdots, r_n)\nonumber\\
&=& \int_{\RR^{n }}
\prod_{i=1}^n e^{-(s_{i+1}-s_i+r_{\si(i+1)}-r_{\si(i)})|\xi_{ik}+\cdots+\xi_{1k}|^2}
 |\xi_{ik}|^{1-2H_k}d\xi_{1k}\cdots d\xi_{nk}\nonumber\\
&=&   \int_{\RR^{n }}
\prod_{i=1}^n e^{-(s_{i+1}-s_i+r_{\si(i+1)}-r_{\si(i)})|\eta_i|^2}
 |\eta_i-\eta_{i-1}|^{1-2H_k}d\eta_1\cdots d\eta_n\,,
\end{eqnarray}
where   $\eta_0:=0$.   
Denote $u_i=s_{i+1}-s_i+r_{\si(i+1)}-r_{\si(i)}$ for  $1\leq i\leq  n$. Let    $u_0=1$,   $X_0=0$,   
and $\{X_1, \cdots,  X_n\}$ be  i.i.d standard Gaussian random variables.    
Then we can write for $0<s_1<s_2<\cdots < s_{n}<t$, 
\begin{eqnarray}
&&g_k(s_1, \cdots, s_n, r_1, \cdots, r_n)
=  c_{H_k}^n  \left( \prod_{i=1}^n u_i^{-1/2} \right) \EE\left[\prod_{i=1}^n 
\left|\frac{X_i}{\sqrt{u_i}}-\frac{X_{i-1}}{\sqrt{u_{i-1}}}\right|^{1-2H_k}\right]\nonumber\\
& &\qquad = c_{H_k}^n  \left( \prod_{i=1}^n u_i^{-1/2} \right)
\left(   \prod_{i=1}^n    (u_iu_{i-1})^{H_k-1/2} \right) \EE\left[ \left|  X_1 
 \right| ^{1-2H_k}  \prod_{i=2}^n 
\left|  \sqrt{u_{i-1}}  X_i - \sqrt{u_i}  X_{i-1}  \right|^{1-2H_k} \right]
\nonumber\\
 & &\qquad =  c_{H_k}^n u_n^{H_k-1} 
\left( \prod_{i=2}^{n-1} u_i^{2H_k-3/2} \right)
\left( \prod_{  i=2 }^{n }  (u_i+u_{i-1})^{\frac12- H_k} \right)  \nonumber\\
& &\qquad \qquad  \times   \EE\left[ \left|  X_1  \right| ^{1-2H_k} \prod_{i=2}^n 
\left| \sqrt{\frac{u_{i-1}}{u_{i-1}+u_i} }X_i - \sqrt{\frac{u_{i }}{u_{i-1}+u_i} }X_{i-1} \right|^{1-2H_k}
\right] \,. \label{e.4.3} 
\end{eqnarray} 
Denote $\la_i=\sqrt{\frac{u_{i-1}}{u_{i-1}+u_i}}$. The  expectation 
(denoted by $I_{k,n}$) in \eqref{e.4.3} is bounded  as follows.
\begin{eqnarray}
I_{k,n} 
&:=& \EE\left[ \left|  X_1 
 \right| ^{1-2H_k}   \prod_{i=2}^{d_*}  
\left|  {\la_i }X_i - \sqrt{1-\la_i^2 }X_{i-1} \right|^{1-2H_k} 
\cdot \prod_{i=d_*+1}^{n}  
\left|  {\la_i }X_i - \sqrt{1-\la_i^2 }X_{i-1} \right|^{1-2H_k}   \right] \nonumber\\
 &\le & C_{d_*, H_k}\EE\left[ \left|  X_1 
 \right| ^{1-2H_k} \left( \prod_{i=2}^{d_*}  \left(
 |  X_i | \vee |X_{i-1} \right)^{1-2H_k} \right)
\left( \prod_{i=d_*+1}^{n}  
\left|  {\la_i }X_i - \sqrt{1-\la_i^2 }X_{i-1} \right|^{1-2H_k}  \right) \right] \nonumber\\
&\le &  C_{d_*, H_k} \left(  \prod_{i=1}^{d_*-1} 
\left( \EE  \left[\left|  X_i   \right| ^{1-2H_k}\right] \right)  \vee \EE  \left[\left|  X_i   \right| ^{2-4H_k}\right] \right) 
 \nonumber \\
 &&  \quad \times \EE\left[ \left|  X_{d_*}  
 \right| ^{1-2H_k}  \prod_{i=d_*+1}^{n}  
\left|  {\la_i }X_i - \sqrt{1-\la_i^2 }X_{i-1} \right|^{1-2H_k} \right] \nonumber\\
&=&  C_{d_*, H_k}   \EE\left[ \left|  X_{d_*}  \right| ^{1-2H_k}  \prod_{i=d_*+1}^{n}  
\left|  {\la_i }X_i - \sqrt{1-\la_i^2 }X_{i-1} \right|^{1-2H_k} \right]\, ,    \label{e:4.4}
 \end{eqnarray}  
 with the convention that $\prod_{i= m}^n a_i :=1$ for $m>n$. 
 
 To bound the remaining expectation we use 
 {the following estimate for standard normal random variable $X$
 from \cite[Lemma A.1]{HNS}: there is a constant $C>0$ so that for any $0<\alpha <1$, $ \lambda >0$ and $b>0$,
 \begin{equation}\label{e:4.5} 
 \EE \left[ | \lambda X +b |^{-\alpha} \right] \leq C  (\lambda \vee b )^{-\alpha} . 
 \end{equation}   
    By taking conditional expectation on the $\sigma$-field  $\sigma  ( X_{d_*}, \cdots X_{n-1})$ and using 
    (\eqref{e:4.5},  } 
\begin{eqnarray*}
&& \EE\left[ \left|  X_{d_*}   \right| ^{1-2H_k}  \prod_{i=d_*+1}^{n}  
\big|  {\la_i }X_i - \sqrt{1-\la_i^2 } \, X_{i-1} \big|^{1-2H_k} \right]\\
& = &\EE\left[ \EE \Big[ \left|  X_{d_*}   \right| ^{1-2H_k}  \prod_{i=d_*+1}^{n}  
\big|  {\la_i }X_i - \sqrt{1-\la_i^2 } \, X_{i-1} \big|^{1-2H_k} \Big| \sigma  ( X_{d_*}, \cdots X_{n-1}) \Big] 
\right]\\ \\
& \leq  & C_{H_k}  \EE\left[ \left|  X_{d_*}  \right| ^{1-2H_k}
 \prod_{i=d_*+1}^{n-1}  \big|  {\la_i }X_i - \sqrt{1-\la_i^2 } \, X_{i-1} \big|^{1-2H_k} \,
    \left(   \Big (\sqrt{1-\la_n^2 } \, |X_{n-1}| \Big) \vee \la_n \right)^{1-2H_k}    
  \right]\\
& \le & C_{H_k} \la_n^{1-2H_k} \EE\left[  \left|  X_{d_*}  
 \right| ^{1-2H_k}    \prod_{i=d_*+1}^{n-1}  
\big|  {\la_i }X_i - \sqrt{1-\la_i^2 }X_{i-1} \big|^{1-2H_k} \right] \\
&  \le & \cdots \le \, C_{H_k}^n 
   \prod_{i=d_*+1}^n  \la_i^{1-2H_k}  \, .
  \end{eqnarray*}
Thus we have by \eqref{e.3.4}-\eqref{e:4.4} that  
\begin{eqnarray}
&&g_k(s_1, \cdots, s_n, r_1, \cdots, r_n)\nonumber\\
&\le & c_{H_k}^n u_n^{H_k-1} \left( \prod_{i=1}^{n-1} u_i^{2H_k-3/2} \right) \left( \prod_{i=2}^n
(u_i+u_{i-1})^{\frac12- H_k} \right)  \prod_{i=d_*+1}^n
\la_i^{1-2H_k}    \nonumber\\
&=&  c_{H_k}^n u_n^{H_k-1} \left( \prod_{i=1}^{n-1} u_i^{2H_k-3/2} \right)
  \left( \prod_{i=2}^{d_*}  (u_i+u_{i-1})^{\frac 12-H_k} \right)   \prod_{i=d_*+1}^n
u_{i-1}^{\frac12 -H_k}   \, .   \nonumber \\
 \end{eqnarray}
  Consequently,  
\begin{eqnarray}
{ 
g (s_1, \cdots, s_n, r_1, \cdots, r_n)}  &=&  \prod_{k=1}^d {  g_k(s_1, \cdots, s_n, r_1, \cdots, r_n)  } 
\nonumber    \\
&\le &  c_{H }^n u_n^{H -d} \left( \prod_{i=1}^{n-1} u_i^{2H -\frac{3d}{2}} \right) 
 \left( \prod_{i=2}^{d_*}  (u_i+u_{i-1})^{\frac d2- {H^*}  } \right) 
 \prod_{i=d_*+1}^n u_{i-1}^{\frac d2 -{H^*}  } \,. \nonumber \\
 \label{e.4.6} 
\end{eqnarray} 

{(i) We first consider the case $1/2<H_0<1$. } 
  When $d\ge 2$ and ${H^*} >d-1$, we  have  ${H^*} > d/2$.  We bound 
$(u_i+u_{i-1})^{\frac d2- {H^*}   }$ in  
 \eqref{e.4.6} by $ u_{i-1} ^{\frac d2- {H^*}   }$. Therefore 
\begin{eqnarray}
g (s_1, \cdots, s_n, r_1, \cdots, r_n) 
&\le &  c_{H }^n u_n^{{H^*}  -d} \left( \prod_{i=1}^{n-1} u_i^{2{H^*}  -\frac{3d}{2}} \right) 
\left( \prod_{i=2}^{d_*}
  u_{i-1} ^{\frac d2- {H^*}  } \right)  \prod_{i=d_*+1}^n
u_{i-1}^{\frac d2 -{H^*}  }\nonumber\\
&  =&
c_{H }^n    \prod_{i=1}^{n} u_i^{ {H^*}  -d }  \,. \label{e.4.7} 
\end{eqnarray} 
It follows then 
\begin{eqnarray}
\EE \left[ u_n^2(t,x) \right] 
&=& 
\int_{{0< s_1<\cdots<s_n< t\atop 0<r_1, 
\cdots, r_n<t} }   g (s_1, \cdots, s_n, r_1, \cdots, r_n)
\prod_{i=1}^n \ga_0(s_i-r_i)  ds_1dr_1\cdots ds_ndr_n\nonumber\\
&\le & C_H^n 
\int_{{0< s_1<\cdots<s_n< t\atop 0<r_1, 
\cdots, r_n<t} }  \prod_{i=1}^n  (s_{i+1}-s_i+r_{\si(i+1)}-r_{\si(i)})^{ {H^*}  -d } 
\nonumber\\
&&\qquad  \times \, \prod_{i=1}^n \ga_0(s_i-r_i)  ds_1dr_1\cdots ds_ndr_n \,. 
\label{e.4.8} 
 \end{eqnarray}
 We use $(a+b)^{-\be}\le a^{-\be/2}b^{-\be/2}$ for all $a, b, \be>0$
 to get 
 \begin{eqnarray}
\EE \left[ u_n^2(t,x) \right] 
&\le & C_H^n 
\int_{{0< s_1<\cdots<s_n< t\atop 0<r_1, 
\cdots, r_n<t} }  \prod_{i=1}^n  (s_{i+1}-s_i )^{ 
\frac{{H^*}  -d }{2} } \prod_{i=1}^n ( r_{\si(i+1)}-r_{\si(i)})^{ 
\frac{{H^*}  -d }{2} } \nonumber\\
&&\qquad \times \, \prod_{i=1}^d \ga_0(s_i-r_i)  ds_1dr_1\cdots ds_ndr_n\nonumber\\
&= & \frac{C_{  H }^n}{ n!} \int_{[0, t]^{2n}}  h(s_1, \cdots, s_n) 
h(r_1, \cdots, r_n) 
 \prod_{i=1}^d \ga_0(s_i-r_i)  ds_1dr_1\cdots ds_ndr_n \,, \nonumber\\
 \end{eqnarray} 
where $h_n(s_1, \cdots, s_n)$ is the symmetric extension
to $[0, t]^n$   of the function $\prod_{i=1}^n  (s_{i+1}-s_i )^{ 
\frac{{H^*}  -d }{2} }  $  defined on $0<s_1<\cdots<s_n<t$.  
Using the multidimensional version of the Hardy-Littlewood
inequality (see e.g. \cite[ (2.4)]{HN})
we have  
\begin{eqnarray}
\EE \left[ u_n^2(t,x) \right]
&\le &      \frac{C_{  H }^n}{ n!}  \left[\int_{[0, t]^n}  h(s_1, \cdots, s_n)^{1/H_0}     ds_1 \cdots ds_n \right]^{2H_0} \, \nonumber\\
&\le &      C_{  H }^n ( n! )^{2H_0-1}
  \left[\int_{0<s_1<\cdots<s_n<t} \prod_{i=1}^n  (s_{i+1}-s_i )^{ 
\frac{{H^*}  -d }{2H_0} }  ds_1 \cdots ds_n \right]^{2H_0} \,. \nonumber\\
 \end{eqnarray} 
When
\begin{equation}
\frac{{H^*} -d}{2H_0}>-1\,, 
\end{equation}
we may use  Lemma \ref{lemsimplex} to bound the above multiple integral
to obtain
\begin{eqnarray}
\EE \left[ u_n^2(t,x) \right] 
&\le &      C_{  H }^n ( n! )^{2H_0-1}
  \left[ \frac{C_{d, H, H_0} ^n} {\Ga\left(\left(\frac{{H^*} -d}{2H_0}+1\right)n+1\right)} t^{ \frac{({H^*} -d)n}{2H_0}+n} \right]^{2H_0} \,.  
  \end{eqnarray} 
For any $p\in [2, \infty)$, by hypercontractivity inequality  $\| u_n (t,x)\|_p \le (p-1)^{n/2}   \| u_n (t,x)\|_2
 $, we have
\begin{eqnarray}
&& \|u_n  (t,x)\|_p  \nonumber\\
 &\le &  p^{n/2}   \left(\EE \left[ u_n^2(t,x) \right] \right)^{p/2}\nonumber\\
 &\le&  C_{  H }^{n /2} p^{n/2}  ( n! )^{(H_0-1/2)  }
  \left[ \frac{C_{d, H, H_0} ^n} {\Ga\left(\left(\frac{{H^*} -d}{2H_0}+1\right)n+1\right)} t^{ \frac{({H^*} -d)n}{2H_0}+n} \right]^{ H_0  } \,.
  \label{e:4.14}
  \end{eqnarray}
  When $H^*>d-1$,  using Stirling's formula for the gamma function that 
 \begin{equation}\label{e:4.15} 
 \Gamma (z) = \sqrt{2\pi/z} \,  (z/e)^z \left(1+ O(1/z)\right)  \quad  \hbox{as } z\to \infty,
 \end{equation} 
  we have by  \eqref{e:4.14}
 $$
 \|u_n  (t,x)\|_p   \leq \frac{C_{d, H, H_0} ^{n } p^{n/2} } {\Ga\left( ({H^*} -d +1)n /2 +1\right)} t^{ ({H^*} -d+2H_0)  n /2}  \,. 
$$
  This implies by the asymptotic behaviour of the  Mittag-Leffler  function  
  (e.g. \cite[p.41, Formula (1.8.10)]{kilbas})  that for all $t>0$
\begin{eqnarray}
 &&\sum_{n=0}^\infty \| u_n  (t,x)\|_p  
  \, \le \,   \sum_{n=0}^\infty\frac{C_{  H, H_0, d }^n p^{n/2} 
  t^{({H^*} -d+2H_0)  n/2}}{\Gamma(n({H^*} -d+1)/2+1)}\nonumber\\
& \le& C\exp\left[ C_H t^{\frac{{H^*} -d+2H_0}{{H^*} -d+1}} p^{\frac{1}{{H^*} -d+1}}
 \right]<\infty \label{e:4.16} .
\end{eqnarray} 
 It follows that $u(t, x):= \sum_{n=0}^\infty u_n  (t,x)$ converges in $L^p(\Omega, \mathcal{F},  \PP)$
for every $p\in [2, \infty)$,
 and $u(t, x)$ is  a global random field solution to \eqref{spde} with $u(0, x)=1$ satisfying \eqref{e.1.8}.

\medskip

When   $H^*=d-1$,     $H^*>d-2H_0$  as $H_0>1/2$  and 
we have from \eqref{e:4.14} by the Stirling's formula \eqref{e:4.15} that 
$$
\|u_n  (t,x)\|_2   \leq  C_{d, H, H_0} ^{n } 2^{n/2}  t^{ ({H^*} -d+2H_0)  n/2}   \exp \left( 
    a_0   (H^*-d+2H_0)n   \right),  
 $$
  where $a_0:= \frac12 ( 1+ \log (2H_0/( 2H_0-1)) >0$.  Clearly there is some positive constant $T_0=T_0 (d, H, H_0) $ so that 
 $\sum_{n=0}^\infty \|u_n  (t,x)\|_2 <\infty$  for any $t\in [0, T_0)$.
It follows that $u(t, x):=\sum_{n=0}^\infty  u(t, x)$ for $(t, x)\in [0, T_0)\times \R^d$ is a 
local random field solution to \eqref{spde}. 
   This   completes the proof of part (i), the existence part, of Theorem  \ref{t.1.2}.

\medskip

(ii) When $H_0=1/2$ and $H^*>d-1$, we replace $\ga_0(s-r)$ in \eqref{e.4.8} by 
$\delta_{\{0\}}(s-r)$. Thus we have 
\begin{eqnarray}
\EE\left[u_n(t,x)^2\right]
&\le&  C_H^n 
\int_{0< s_1<\cdots<s_n< t }  \prod_{i=1}^n  (s_{i+1}-s_i )^{ {H^*}  -d }
ds_1\cdots ds_n  \nonumber\\
&\le&  \frac{C_{H, d} ^n }{\Ga(({H^*} -d+1)n+1)} t^{({H^*} -d+1)n}\,. 
\end{eqnarray} 
By a similar argument to that of \eqref{e:4.15}, we have 
\[
 \sum_{n=0}^\infty \|u_n  (t,x)\|_p
 \le  \sum_{n=0}^\infty\frac{C_{  H,  d }^n  p^{n/2} t^{({H^*} -d+1)  n/2}}{\Gamma(n({H^*} -d+1)/2+1)}
 \le C\exp\left[ C_{H,d}  t p^{\frac{1}{{H^*} -d+1} } \right]<\infty \,. 
 \]
It follows that $u(t, x):= \sum_{n=0}^\infty u_n(t, x)$ converges in   $L^p(\Omega, \mathcal{F},  \PP)$
for every $p\in [2, \infty)$,
and $u(t, x)$ is  a global random field solution to \eqref{spde} with $u (0, x)=1$ on $\R^d$ satisfying  \eqref{e.1.5}. 
   This   completes the proof of part (i), the existence part, of Theorem  \ref{t.1.1}.

 \bigskip

\bigskip

\end{document}